\documentclass[a4paper,12pt]{amsart}

\newif\ifdraft
\draftfalse

\usepackage[T1]{fontenc}
\usepackage{mathtools}
\usepackage{amssymb}
\usepackage{microtype}

\usepackage[
  a4paper,
  textwidth=16cm,
  textheight=23cm,
  centering
]{geometry}

\usepackage{graphicx}
\usepackage{enumitem}
\usepackage{xcolor}

\allowdisplaybreaks[2]

\ifdraft
  \usepackage{showkeys}
\fi

\definecolor{linkblue}{HTML}{1F4E79}
\usepackage[
  unicode,
  pdfusetitle,
  colorlinks=true,
  linkcolor=linkblue,
  citecolor=linkblue,
  urlcolor=linkblue
]{hyperref}
\usepackage[nameinlink,noabbrev]{cleveref}
\numberwithin{equation}{section}
\numberwithin{figure}{section}
\numberwithin{table}{section}

\theoremstyle{plain}
\newtheorem{theorem}{Theorem}[section]
\newtheorem{lemma}[theorem]{Lemma}
\newtheorem{proposition}[theorem]{Proposition}
\newtheorem{corollary}[theorem]{Corollary}
\newtheorem*{hall}{Hall's Marriage Theorem}

\theoremstyle{definition}
\newtheorem{definition}[theorem]{Definition}

\theoremstyle{remark}

\newcommand{\bbZ}{\mathbb{Z}}
\newcommand{\bbR}{\mathbb{R}}

\DeclareMathOperator{\Prob}{Prob}
\DeclareMathOperator{\Cov}{Cov}
\DeclareMathOperator{\ord}{ord}
\DeclareMathOperator{\supp}{supp}
\DeclareMathOperator{\diam}{diam}
\DeclareMathOperator{\mesh}{mesh}

\newcommand{\cM}{\mathcal M}
\newcommand{\cR}{\mathcal R}
\newcommand{\cE}{\mathcal E}
\newcommand{\cU}{\mathcal U}
\newcommand{\cV}{\mathcal V}
\newcommand{\cW}{\mathcal W}
\newcommand{\cA}{\mathcal A}
\newcommand{\Deltaq}{\Delta^{q-1}}

\title[Sharp Embedding Theorem]{A Sharp Embedding Theorem for Topological
Dynamical Systems}

\author{Ruxi Shi}
\address{Shanghai Center for Mathematical Sciences, Fudan University,
Shanghai 200438, China}
\email{ruxishi@fudan.edu.cn}
\thanks{This work was supported by the NSFC Nos.~12571198 and 12231013.}

\subjclass[2020]{Primary 37B05; Secondary 37B52, 54F45}
\keywords{topological dynamical systems, equivariant embedding, mean
dimension, dynamical dimension, Baire category, generic maps}

\begin{document}

\begin{abstract}
Let $(X,T)$ be a topological dynamical system, and let $\dim(X,T)$ denote
Meyerovitch's dynamical dimension.  We prove that, for every integer $n\geq1$,
if $\dim(X,T)<n/2$, then the set of maps $f\in C(X,[0,1]^n)$ for which
the orbit map
\[
   I_f\colon X\longrightarrow([0,1]^n)^{\bbZ},
   \qquad I_f(x)=\bigl(f(T^k x)\bigr)_{k\in\bbZ},
\]
is a topological embedding is a dense $G_\delta$ set.
Consequently, $(X,T)$ embeds into
$(([0,1]^n)^{\bbZ},\sigma)$ for some integer $n\geq1$ if and only if
$\dim(X,T)<\infty$.  The constant $1/2$ and the strict
inequality are optimal.
\end{abstract}

\maketitle

\section{Introduction}\label{sec:introduction}

The classical embedding problem provides the model for the dynamical
question.  For a compact metrizable space $K$,
Lebesgue covering dimension gives a complete answer at the level of
finite-dimensional embeddability:
\[
   K\text{ embeds into }[0,1]^m\text{ for some }m<\infty
   \quad\Longleftrightarrow\quad
   \dim K<\infty.
\]
Indeed, if $\dim K=d<\infty$, the Menger--N\"obeling theorem embeds
$K$ into $[0,1]^{2d+1}$ \cite[Theorem~V.2]{HurewiczWallman}, whereas
every compact subspace of a finite-dimensional cube has finite covering
dimension.  The factor two cannot be improved
in general: for every $d\geq1$ there are $d$-dimensional compacta that
do not embed into $\bbR^{2d}$.  Thus covering dimension both
detects finite-dimensional embeddability and gives the sharp universal
relation between source and target dimensions.

The dynamical problem asks for an analogue of this picture.  Let
$(X,T)$ be a topological dynamical system.  The natural universal models
with finitely many coordinates per time are the cubical shifts
\[
   \bigl(([0,1]^n)^{\bbZ},\sigma\bigr),
   \qquad \text{where}\qquad
   (\sigma z)_k=z_{k+1}.
\]
Although the alphabet has dimension $n$, the shift space itself has
infinite covering dimension; the relevant resource is the number of
coordinates available at each unit of time.  Every continuous
equivariant map to the shift is the orbit map
\[
   I_f(x)=\bigl(f(T^kx)\bigr)_{k\in\bbZ}
\]
of a single map $f\colon X\to[0,1]^n$ (see
Section~\ref{sec:basic-notation}).  We therefore seek a dynamical
counterpart of $\dim K$: an invariant whose finiteness detects
embeddability into a cubical shift with finite-dimensional alphabet and
whose value controls the sharp universal bound on alphabet dimension.

Mean dimension was the first natural asymptotic candidate.  Introduced by
Gromov \cite{Gromov1999} and developed systematically by Lindenstrauss
and Weiss \cite{LindenstraussWeiss2000}, it measures the asymptotic
amount of topological dimension per iterate.  It has the basic
properties required of an embedding obstruction: it is monotone under
equivariant embeddings and
\[
   \operatorname{mdim}\bigl(([0,1]^n)^{\bbZ},\sigma\bigr)=n.
\]
Hence an embedding into the $n$-cubical shift forces
$\operatorname{mdim}(X,T)\leq n$ \cite{LindenstraussWeiss2000}.  In the
converse direction,
Lindenstrauss proved an embedding theorem under
$\operatorname{mdim}(X,T)<n/36$ for extensions of infinite minimal
topological dynamical systems
\cite[Theorem~5.1]{Lindenstrauss1999}.

Mean dimension does not record the periodic-point obstruction.  If $(X,T)$
embeds into the $n$-cubical shift, then for each $k\geq1$ the restriction
to
\[
   \operatorname{Fix}(T^k):=\{x\in X:T^kx=x\}
\]
embeds into
$\operatorname{Fix}(\sigma^k)\cong([0,1]^n)^k$.  Thus every
periodic-point set carries its own classical finite-dimensional
embedding problem.  Here and below, $\dim\varnothing=-1$.  Guided by the
Menger--N\"obeling theorem,
Lindenstrauss and Tsukamoto conjectured that the two general-position
bounds
\[
   \operatorname{mdim}(X,T)<\frac n2,
   \qquad
   \frac{\dim\operatorname{Fix}(T^k)}{k}<\frac n2
   \quad(k\geq1)
\]
should suffice for an embedding into the $n$-cubical shift
\cite[Conjecture~1.2]{LindenstraussTsukamoto2014}.  A sequence of
results strongly supported the one-half threshold in broad aperiodic
settings: it was proved for extensions of aperiodic finite-alphabet
subshifts \cite{GutmanTsukamoto2014}, for minimal systems and extensions
of nontrivial minimal systems
\cite[Theorem~1.4 and Corollary~3.5]{GutmanTsukamoto2020}, and, more
generally, for systems with the marker property
\cite{GutmanQiaoTsukamoto2019}.

The general conjecture nevertheless failed.  Recently, Dranishnikov and Levin
constructed a free $\bbZ$-action by isometries on a compact metric space
which has mean dimension zero but does not embed into any cubical shift
with finite-dimensional alphabet \cite{DranishnikovLevin2025}.  The
example satisfies both conjectured bounds for every alphabet dimension
and still admits no such embedding.  Thus mean dimension and
periodic-point data do not characterize shift embeddability.

Very recently, Meyerovitch introduced a new notion of dimension, denoted by
$\dim(X,T)$, that captures this missing information \cite{Meyerovitch2026}.
This invariant has the basic properties suggested by the classical and
dynamical embedding problems.  It
is monotone under equivariant embeddings
\cite[Proposition~3.1]{Meyerovitch2026}; for an identity action it
reduces directly from the definition to Lebesgue covering dimension;
for amenable group actions it satisfies
\[
   \operatorname{mdim}(X,T)\leq\dim(X,T)
\]
\cite[Theorem~9.1]{Meyerovitch2026}; and for every $k\geq1$ it satisfies
\[
   \frac{\dim\operatorname{Fix}(T^k)}{k}\leq\dim(X,T),
\]
so it incorporates the periodic-point constraints
\cite[Introduction, item~(5)]{Meyerovitch2026}.  It also assigns
dimension $n$ to the $n$-cubical shift
\cite[Theorem~8.1]{Meyerovitch2026}.  Moreover, the
Dranishnikov--Levin example has dynamical dimension $+\infty$
\cite[Corollary~10.2]{Meyerovitch2026}.  For systems with the marker
property (as well as uniform Rokhlin property), Meyerovitch showed
\[
   \dim(X,T)=\operatorname{mdim}(X,T)
\]
\cite[discussion preceding Lemma~9.1 and Theorem~9.2]{Meyerovitch2026}.
These facts identify dynamical dimension as the relevant obstruction in
the present problem.

Meyerovitch further proved that the bound $\dim(X,T)<n/2$ gives a dense
$G_\delta$ set of almost-embedding maps for actions of arbitrary
countable groups \cite[Theorem~10.1]{Meyerovitch2026}.  Such a map is a
measure-theoretic isomorphism for every invariant Borel probability
measure, but it need not separate every pair of points topologically.
For distal systems, almost embeddings are genuine topological
embeddings \cite[Proposition~10.2]{Meyerovitch2026}.  The remaining
question for general systems was therefore topological: does the same
bound $\dim(X,T)<n/2$ force a genuine embedding into the $n$-cubical
shift?

This paper answers that question affirmatively for arbitrary
$\bbZ$-actions, without assuming freeness, the marker property, or
distality.  The formal definition of $\dim(X,T)$ is recalled in
Section~\ref{sec:dynamical-dimension}, and $C(X,[0,1]^n)$ is equipped with the
uniform topology.  Our main result is as follows.

\begin{theorem}\label{thm:main}
Let $(X,T)$ be a topological dynamical system, and let
$n\geq1$ be an integer.  If
\[
   \dim(X,T)<\frac n2,
\]
then the set
\[
   \bigl\{f\in C(X,[0,1]^n):I_f\text{ is a topological embedding}\bigr\}
\]
is a dense $G_\delta$ subset of $C(X,[0,1]^n)$.  In particular,
\[
   (X,T)\hookrightarrow(([0,1]^n)^{\bbZ},\sigma).
\]
\end{theorem}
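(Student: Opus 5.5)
We follow the standard Baire-category scheme. Fix a compatible metric $d$ on $X$. For $\varepsilon>0$ let
\[
   \cE_\varepsilon=\bigl\{f\in C(X,[0,1]^n): I_f(x)\neq I_f(y)\text{ whenever }d(x,y)\geq\varepsilon\bigr\}.
\]
The plan has three steps.

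\textbf{Step 1 (open sets and the $G_\delta$ property).} Put the metric $D(z,w)=\sum_k 2^{-|k|}\|z_k-w_k\|$ on the shift. By compactness, $f\in\cE_\varepsilon$ exactly when there is a finite $N$ and $\delta>0$ such that $\max_{|k|\le N}\|f(T^kx)-f(T^ky)\|>\delta$ whenever $d(x,y)\ge\varepsilon$. This condition is stable under small uniform perturbations of $f$, so $\cE_\varepsilon$ is open. Since $X$ is compact, $I_f$ is an embedding exactly when it is injective, and injectivity is equivalent to $f\in\bigcap_m\cE_{1/m}$. So the embedding set is $G_\delta$, and by the Baire category theorem it suffices to prove that each $\cE_\varepsilon$ is dense.

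\textbf{Step 2 (density via the definition of $\dim(X,T)$).} Fix $g\in C(X,[0,1]^n)$ and $\eta>0$. We want $f\in\cE_\varepsilon$ with $\|f-g\|<\eta$. We will unpack Meyerovitch's definition from Section~\ref{sec:dynamical-dimension}. As we understand it, $\dim(X,T)<n/2$ provides, at scale $\varepsilon$, an open cover $\cU$ of $X$ (or a family of covers adapted to orbit segments) together with a nerve-type map $\phi\colon X\to K$ into a polyhedron. The map $\phi$ has small fibres, meaning $\phi(x)=\phi(y)$ implies $d(x,y)<\varepsilon$. The polyhedron carries a compatible shift structure, and the key property is that for every finite window $\{-N,\dots,N\}$ the local dimension of the relevant pieces is less than $n(2N+1)/2$, at least up to a bounded boundary error.

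We first approximate $g$ by a map $f_0=P\circ\phi$, where $P$ is piecewise affine on $K$. Then we perturb the vertex values of $P$ into general position in $[0,1]^n$. For a pair $x,y$ with $d(x,y)\ge\varepsilon$, the vectors $(f(T^kx))_{|k|\le N}$ and $(f(T^ky))_{|k|\le N}$ lie in $[0,1]^{n(2N+1)}$. They are affine images of two simplices whose combined dimension is less than $n(2N+1)$. Generic vertex choices therefore make the two images disjoint, and this is the Menger--N\"obeling count that produces the factor $1/2$.

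\textbf{Step 3 (the main obstacle: orbit overlaps).} The difficulty is that the same vertex values are reused along the orbit. When $x$ and $y$ lie on the same orbit, or when their orbit segments pass near each other or near periodic points, the two simplices share vertices. In that case the naive general-position argument breaks down.

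Handling this without markers is exactly where $\dim(X,T)$, rather than $\operatorname{mdim}$, should help, since it bounds $\dim\operatorname{Fix}(T^k)/k$ and also controls overlapping windows. Our first attempt will be a finite-dimensional transversality lemma: if $A\subset$ vertices is shared, the affine map on the union of two windows factors through the union of the vertex sets. We then want a dimension count of the form $\dim(\sigma_1\cup\sigma_2)<n\cdot\#(\text{distinct vertices in the windows})$. Such a count should follow from the definition's subadditive control over arbitrary finite subsets of $\bbZ$, not just intervals.

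The set of bad vertex assignments is then a finite union of proper algebraic subsets, so generic choices avoid all of them. Finally, a compactness argument over $x,y$ with $d(x,y)\ge\varepsilon$ reduces everything to finitely many combinatorial configurations, which completes the density proof.
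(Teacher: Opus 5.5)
Your Step~1 is correct and is essentially the paper's Baire reduction (Lemma~\ref{lem:GU-open} and Proposition~\ref{prop:baire-reduction}, with $\varepsilon$-separation in place of $\cU$-separation). The gap is in Steps~2--3, which carry all of the difficulty.

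\textbf{Where Step~2 fails.} The inference ``the two simplices have combined dimension less than $n(2N+1)$, so generic vertex choices make the images disjoint'' is not valid. The window map $x\mapsto(f(T^kx))_{|k|\le N}$ is not a generic affine map. The same vertex assignment $P$ is used at every time, so the map is rigidly shift-structured, and genericity of the vertex values does not make it generic. This reuse is not special to pairs on nearby or periodic orbits, as your Step~3 suggests; it affects every pair.

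Concretely, suppose $f=B\phi$ with $\phi\colon X\to\Deltaq$, and put $w_k=\phi(T^kx)-\phi(T^ky)$. Then $I_f(x)=I_f(y)$ means $Bw_k=0$ separately for each $k$. A general-position choice of $B$ can therefore only exploit a per-time bound on the support of $w_k$. But $\dim(X,T)<n/2$ controls only the orbit average of the local order: one gets $\sum_{j\in I}\kappa(T^jx)\le\rho|I|+C$ (Lemma~\ref{lem:orbit-sum}). At individual times $\kappa$ can exceed any bound, for instance whenever $\dim X=\infty$.

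\textbf{Where Step~3 falls short.} It only posits a dimension count that ``should follow'' from a subadditivity property the definition does not provide. It also asserts without proof that the bad sets are proper algebraic subsets. So density of $\cE_\varepsilon$ is not established.

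\textbf{What is missing.} You need a map that lets excess unknowns at one time be absorbed by output coordinates at nearby times, and a proof that the resulting structured systems are generically nonsingular. The paper does this in three steps.
\begin{enumerate}
\item It uses the banded convolution $f(x)=\sum_{|r|\le R}M_r\phi(T^rx)$.
\item For a bad pair, it picks a window $J$ at record times of $Q(t)-Q(s)=\sum_{j=s}^{t-1}(n-k_j)$, which gives boundary slack. It then applies Hall's theorem to match the unknowns in $J$ $R$-injectively to output coordinates over the $R$-interior $K$, with $2nR>C_0+\iota$ absorbing the constant.
\item It shows, via a monomial specialization and an uncrossing lemma, that for generic $M$ every such matched minor is invertible. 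Hence $\cM_Mw=0$ on $K$ forces the unknowns to vanish.
\end{enumerate}

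\textbf{The sharp constant.} Even with this machinery, counting nonzero coordinates of $w_j$ gives only $k_j\le\kappa(T^jx)+\kappa(T^jy)+2$, which yields the condition $2\rho+2<n$. Your ``combined dimension'' heuristic presumes the affine count $\kappa(T^jx)+\kappa(T^jy)$. The paper realizes that count by expanding $w_j$ in signed differences $e_b-e_a$. At times where the supports are disjoint, it splits off offsets $v_j$ and collects them into a single extra column $V_J$ with coefficient $1$, costing only one output coordinate in total. Your proposal does not supply this step, and without it an argument of this kind only reaches the non-sharp threshold.
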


\begin{corollary}\label{cor:finite-dim-shift}
A topological dynamical system embeds into
$(([0,1]^n)^{\bbZ},\sigma)$ for some integer $n\geq1$ if and only if its
dynamical dimension is finite.
\end{corollary}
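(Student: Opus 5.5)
Both directions follow from results already quoted: the ``if'' direction from \cref{thm:main}, and the ``only if'' direction from monotonicity of $\dim(X,T)$ under equivariant embeddings together with Meyerovitch's computation of the dimension of the cubical shift. No new construction is needed; the plan is to assemble these ingredients and check that the finiteness bookkeeping works.

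For the ``only if'' direction, I will suppose $(X,T)$ embeds equivariantly into $(([0,1]^n)^{\bbZ},\sigma)$ for some $n\geq1$. Monotonicity under equivariant embeddings \cite[Proposition~3.1]{Meyerovitch2026} gives
\[
   \dim(X,T)\leq\dim\bigl(([0,1]^n)^{\bbZ},\sigma\bigr).
\]
The right-hand side equals $n$ by \cite[Theorem~8.1]{Meyerovitch2026}. Hence $\dim(X,T)\leq n<\infty$.

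For the ``if'' direction, I will suppose $\dim(X,T)<\infty$. Then I can choose an integer $n\geq1$ with $n>2\dim(X,T)$, i.e.\ $\dim(X,T)<n/2$. Requiring $n\geq1$ also covers the degenerate cases where $\dim(X,T)$ is $-1$ (empty $X$) or $0$. With this $n$, \cref{thm:main} says that the set of $f\in C(X,[0,1]^n)$ for which $I_f$ is a topological embedding is a dense $G_\delta$ subset. Since $C(X,[0,1]^n)$ is a complete metric space under the uniform metric, the Baire category theorem makes this set nonempty, so such an $f$ exists. The orbit map $I_f$ is automatically equivariant, because $I_f(Tx)=\sigma I_f(x)$. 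So $I_f$ is an embedding of $(X,T)$ into $(([0,1]^n)^{\bbZ},\sigma)$.

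There is no real obstacle here; all the depth is in \cref{thm:main}. The only point needing care is that $\dim(X,T)$ takes values in $\{-1\}\cup[0,\infty]$. I must check that ``finite'' excludes only $+\infty$, so that an integer $n$ with $\dim(X,T)<n/2$ always exists when $\dim(X,T)<\infty$.
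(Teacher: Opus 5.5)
Your proof is correct and matches the paper's. The converse direction is the same appeal to monotonicity and $\dim\bigl(([0,1]^n)^{\bbZ},\sigma\bigr)=n$. The only difference is in the forward direction: the paper derives it from the qualitative Theorem~\ref{thm:warmup} (any $n>2\rho+2$ with $\rho>\dim(X,T)$), which needs only regular rather than superregular coefficient families, whereas you cite the sharp Theorem~\ref{thm:main}, and the paper explicitly notes that this route also works.
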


Although Corollary~\ref{cor:finite-dim-shift} follows from
Theorem~\ref{thm:main}, we first give a simpler proof that does not track
the optimal alphabet dimension.  This relatively simple proof illustrates
the main idea of the argument.

\begin{theorem}\label{thm:warmup}
Let $(X,T)$ be a topological dynamical system.  If
\[
   \dim(X,T)<\infty,
\]
then there is an integer $n\geq1$ for which
\[
   \bigl\{f\in C(X,[0,1]^n):I_f\text{ is a topological embedding}\bigr\}
\]
is a dense $G_\delta$ subset of $C(X,[0,1]^n)$.
\end{theorem}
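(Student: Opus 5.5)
We argue via Baire category in the complete metric space $C(X,[0,1]^n)$ with the uniform metric. Fix a compatible metric $d$ on $X$. For $\varepsilon>0$ let
\[
   \cU_\varepsilon=\bigl\{f\in C(X,[0,1]^n):\ I_f(x)\neq I_f(y)\ \text{whenever}\ d(x,y)\geq\varepsilon\bigr\}.
\]
Since $X$ is compact, $I_f$ is an embedding if and only if it is injective, which holds if and only if $f\in\bigcap_{m}\cU_{1/m}$. So it suffices to show that each $\cU_\varepsilon$ contains an open dense set; in fact we will show it is itself open and dense.

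\emph{Openness.} Suppose $f\in\cU_\varepsilon$. The set $\{(x,y):d(x,y)\geq\varepsilon\}$ is compact, and the function $(x,y)\mapsto\sup_k|f(T^kx)-f(T^ky)|$ is lower semicontinuous and positive there. A finite-window argument makes this uniform: there are $N$ and $\eta>0$ with $\max_{|k|\leq N}|f(T^kx)-f(T^ky)|\geq\eta$ on that set. This window condition persists for every $g$ with $\|g-f\|<\eta/2$, so $\cU_\varepsilon$ is open.

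\emph{Density.} This is where $\dim(X,T)<\infty$ enters, and it is the main obstacle. By the definition of Meyerovitch's dynamical dimension recalled in Section~\ref{sec:dynamical-dimension}, finiteness $\dim(X,T)=D<\infty$ should supply, for every $\varepsilon$, an $\varepsilon$-fine open cover $\cU$ of $X$ (or an $\varepsilon$-embedding $\phi\colon X\to K$ into a polyhedron) with the following property: the orbit-indexed family $\{T^{-k}\cU\}_{k}$ has multiplicity controlled \emph{per unit time} by $D$. Concretely, I expect it to give a $\bbZ$-equivariant $\varepsilon$-map $\Phi$ from $X$ into a shift-like simplicial object whose "local dimension per coordinate" is at most $D$. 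Given $f$ and $\delta>0$, the plan has three steps.
\begin{enumerate}[label=(\roman*)]
\item Choose the cover for $\varepsilon$ and a subordinate partition of unity $\{\psi_i\}$. Use the cover's compatibility with $T$ to write a perturbation of the form $g=f+\sum_i\psi_i\,v_i$, where $v_i\in\bbR^n$ are small vectors.
\item Note that $I_g(x)=I_g(y)$ imposes infinitely many linear conditions $g(T^kx)=g(T^ky)$ on the parameters $(v_i)$. Restrict to a window $|k|\leq N$.
\item Apply a Menger--N\"obeling type general-position count. The configurations of the pair $(x,y)$, read through the nerve on the window, form a family of dimension at most $2D(2N+1)+o(N)$, while the equations impose $n(2N+1)$ independent conditions. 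So once $n>2D$ (say $n=2\lceil D\rceil+1$) and $N$ is large, generic $(v_i)$ avoid all coincidences for pairs with $d(x,y)\geq\varepsilon$.
\end{enumerate}

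The delicate points are twofold. First, the equations must really be independent, i.e.\ the supports $T^{-k}$ of distinct cover elements used for $x$ and for $y$ must be distinguishable. When the two orbits share open sets, including the periodic case $T^kx=x$, the conditions can collapse. I would handle this by using the structure built into the definition of $\dim(X,T)$, which by design accounts for $\operatorname{Fix}(T^k)$: the parameters are attached to "orbit-patterns" rather than to points, so that coincident pieces of the two orbits carry the same free parameter and separation comes from the non-coincident pieces. Second, the boundary error of the window must be absorbed into the $o(N)$ term. Since we do not aim for the sharp constant here, we may take $n$ generously large (e.g.\ $n=4\lceil D\rceil+4$) to leave slack for these errors. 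Density together with openness, combined through the Baire category theorem, then gives the dense $G_\delta$ conclusion.
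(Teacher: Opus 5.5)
Your Baire framework and openness argument are correct and coincide with the paper's reduction (Lemma~\ref{lem:GU-open} and Proposition~\ref{prop:baire-reduction}). Your guessed equivariant code into a shift over a simplex is essentially the map $I_\phi$ of Lemma~\ref{lem:coding-package}. But the density step, which is the whole content of the theorem, is only a heuristic, and two of its load-bearing claims fail as stated.

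First, $\dim(X,T)<\infty$ gives no pointwise ``per unit time'' multiplicity bound. It only controls $\sup_{\mu\in\Prob(X,T)}\int\ord(\cV,\cdot)\,d\mu$, whereas you must handle every pair $(x,y)$. The paper upgrades this to the uniform estimate $\sum_{j\in I}\kappa(T^jx)\le\rho|I|+C$ for all $x$ and all finite intervals $I$ (Lemma~\ref{lem:orbit-sum}). It does so by a Krylov--Bogolyubov compactness argument, which needs an \emph{upper} semicontinuous multiplicity function. Since $\ord(\cV,\cdot)$ is only lower semicontinuous, one passes to $\kappa(x)=|\{i:x\in\supp\phi_i\}|-1$, built from closed supports. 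Even then the control is only on averages with an additive constant $C$, so at individual times the number $k_j$ of nonzero coordinates of $w_j=\phi(T^jx)-\phi(T^jy)$ can exceed $n$.

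Second, the count in (iii) is inconsistent with your perturbation. Once the cover is fixed, the parameters $(v_i)\in(\bbR^n)^q$ are fixed in number. So the equations $g(T^kx)=g(T^ky)$, $|k|\le N$, impose at most $nq$ independent conditions on them, not $n(2N+1)$. The comparison with $2D(2N+1)+o(N)$ therefore breaks down exactly in the regime $N\to\infty$ that you need.

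The paper does not count dimensions of pair configurations at all. It perturbs by a band convolution $f=\sum_{|r|\le R}M_r\,\phi\circ T^r$, which is linear on the product of simplices, so dimensions add along the orbit. It then treats the nonzero coordinates of $w=I_\phi(x)-I_\phi(y)$ on a window $J\ni0$ as the unknowns of the banded system $\cM_Mw=0$, restricted to the $R$-interior $K$ of $J$. Forcing $w|_J=0$ contradicts $w_0\neq0$, which comes from $\Lambda(x)\cap\Lambda(y)=\varnothing$.

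This requires an injective assignment of unknowns to slots in $K\times\{1,\ldots,n\}$ at time distance at most $R$. Here your claim that boundary errors are absorbed in an $o(N)$ term is false. Near the left end $A$ of $J$, Hall's condition requires $\sum_{j=A}^{b}k_j\le n(b-A+1)$ for small $b-A$. The available bound $\sum_{j\in I}k_j\le\alpha|I|+C$ with $\alpha<n$ does not give this, and a bounded excess at the edge cannot be moved into the interior because of the band structure. When Hall fails, every square minor on those columns vanishes identically, so the equations no longer force $w=0$.

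The paper therefore chooses $J=[A,B]$ depending on the pair, with $A$ and $B$ record times of the partial sums of $n-k_j$ (Lemma~\ref{lem:finite-window-matching}). It then shows that each of the countably many admissible minors has a nonzero determinant polynomial, via the specialization $X^{(\eta-\xi)^2}$ and the unique minimizing matching of Lemma~\ref{lem:banded-uncrossing}. Hence a single generic $M$ works for all pairs simultaneously.

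Finally, the periodic-point ``collapse'' you worry about does not arise on this route. The orbit-sum bound holds at every point, and coincidences at other times merely delete coordinates from the linear system. Your remedy of attaching parameters to ``orbit-patterns'' is not made precise enough to evaluate.
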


Theorem~\ref{thm:warmup} proves one implication in
Corollary~\ref{cor:finite-dim-shift}.  The converse follows from the
monotonicity of dynamical dimension and the formula
\[
   \dim\bigl(([0,1]^n)^{\bbZ},\sigma\bigr)=n
\]
\cite[Proposition~3.1 and Theorem~8.1]{Meyerovitch2026}; see the end of
Section~\ref{sec:warmup}.

Dynamical dimension thus plays for equivariant shift embeddings the role
of Lebesgue covering dimension for embeddings of compacta.  For a
prescribed alphabet dimension, Theorem~\ref{thm:main} gives the optimal
universal bound
\[
   \dim(X,T)<\frac n2
   \quad\Longrightarrow\quad
   (X,T)\hookrightarrow(([0,1]^n)^{\bbZ},\sigma).
\]
Moreover, the maps whose orbit maps are embeddings form a dense $G_\delta$ set.

The threshold is sharp.  For the identity
action on a compact metrizable space $K$, one has
$\dim(K,\mathrm{id})=\dim K$, and an equivariant orbit map into the
$n$-cubical shift is an embedding precisely when its time-zero
coordinate embeds $K$ into $[0,1]^n$.  Thus, when $\dim K=d<\infty$,
Theorem~\ref{thm:main} with $n=2d+1$ recovers the classical
Menger--N\"obeling bound.  In the opposite direction,
Proposition~\ref{prop:sharp} gives, for every $D\geq1$, an identity-action
system of dynamical dimension $D$ which does not embed into the
$2D$-cubical shift.  Proposition~\ref{prop:minimal-sharp} shows that the
same boundary failure occurs in genuine dynamics: for every integer $n\geq1$
there is a minimal system with $\dim(X,T)=n/2$ which does not embed into
the $n$-cubical shift.  Hence the constant $1/2$ cannot be increased,
and the strict inequality cannot be replaced by a non-strict one, even
among minimal systems.

For systems with the marker property, dynamical dimension equals mean
dimension, so Theorem~\ref{thm:main} recovers the sharp condition
$\operatorname{mdim}(X,T)<n/2$.  The theorem itself assumes neither the
marker property, freeness, nor distality, and therefore also applies to
marker-free systems of finite dynamical dimension; an explicit example
is studied in \cite{Shi2026}.  It also strengthens Meyerovitch's almost
embedding conclusion to pointwise injectivity on all of $X$.

\subsection{Idea of the proof}

We describe the proof of Theorem~\ref{thm:warmup}; the proof of
Theorem~\ref{thm:main} follows the same idea.

The proof begins with a coverwise separation problem.  For a finite open
cover $\cU$ of $X$, let $\mathcal G(\cU)$ be the set of maps $f$
such that
\[
   I_f(x)=I_f(y)\quad\Longrightarrow\quad
   \{x,y\}\subset U\quad\text{for some }U\in\cU.
\]
Compactness shows that $\mathcal G(\cU)$ is open.  If it is dense for
every $\cU$, the Baire category theorem, applied to covers whose meshes
tend to zero, gives a residual set of injective orbit maps.  This
reduction is proved in Section~\ref{sec:coverwise-baire}.

The density argument uses a simplex as an intermediate space.  Given $f_0$, $\cU$,
and $\rho>\dim(X,T)$, choose a partition of unity subordinate to a
suitable refinement and let $\phi$ be the resulting simplex-valued map.
We obtain an upper semicontinuous function $\kappa$, for which
Lemma~\ref{lem:orbit-sum} gives the uniform bound
\[
   \sum_{j\in I}\kappa(T^j x)\leq\rho|I|+C
\]
for every $x\in X$ and every finite integer interval $I$.  A local affine
function of the code approximates $f_0$; it remains close to $f_0$ after
a sufficiently small finite-band convolution perturbation. So new we map $X$ equivarently and $\mathcal{U}$-injectively to a subset of $(\mathbb{R}^q)^\mathbb{Z}$ which has the density of non-zero elements bounded by $\rho$. But the number $q$ would be much larger than $n$. Then we need to find a generic linear map from $(\mathbb{R}^q)^\mathbb{Z}$ to $(\mathbb{R}^n)^\mathbb{Z}$ which is injective on the subset mentioned before. In order to do this, we localize the linear map from infinite dimensional space to finite one.

Choose $n$ with $2\rho+2<n$ and retain every
nonzero standard coordinate of
\[
   w_j:=\phi(T^j x)-\phi(T^j y).
\]
Put $E_j:=\{i:w_{j,i}\neq0\}$ and $k_j:=|E_j|$.  Then
\[
   k_j\leq\kappa(T^j x)+\kappa(T^j y)+2,
   \qquad
   \sum_{j\in I}k_j\leq(2\rho+2)|I|+2C.
\]
Lemma~\ref{lem:finite-window-matching} gives integer intervals
$J=[A,B]$ and $K=[A+R,B-R]$, with $0\in J$, and an $R$-injective map
\[
   \pi\colon E:=\{(j,i):j\in J,\ i\in E_j\}
   \longrightarrow K\times\{1,\ldots,n\}.
\]
Let $\cR:=\pi(E)$.  For a coefficient family
$M=(M_{-R},\ldots,M_R)$, set
\[
   (\cM_Mz)_k:=\sum_{r=-R}^{R}M_rz_{k+r}
\]
and form the square matrix
\[
   A_J(M):=
   \bigl[(\cM_M\mathbf e_{j,i})_{k,c}\bigr]_{
      (k,c)\in\cR,\ (j,i)\in E},
\]
where $\mathbf e_{j,i}$ is supported at time $j$ and has value $e_i$
there.  We choose $M$ so that every matrix of this form is invertible.
Such a coefficient family exists by
Proposition~\ref{prop:generic-regular}.
If $I_f(x)=I_f(y)$, then $\cM_Mw=0$.  Since $K$ is the $R$-interior of
$J$, restriction to the coordinates in $\cR$ gives
\[
   A_J(M)(w_{j,i})_{(j,i)\in E}=0.
\]
The vector on the left is nonzero, whereas $A_J(M)$ is invertible, a
contradiction.

\subsection{Organization of the paper}

Section~\ref{sec:preliminaries} recalls the basic definitions.
Section~\ref{sec:orbit-sum-and-coding} proves the Baire reduction,
establishes the uniform orbit-sum estimate, and constructs the simplex code.
Section~\ref{sec:warmup} proves the qualitative
theorem, including the finite-window matching lemma.
Section~\ref{sec:sharp-upgrade} develops the signed-difference
decomposition and establishes the generic superregularity needed
for the sharp count.  Section~\ref{sec:sharp-proof} proves
Theorem~\ref{thm:main}.  Sections~\ref{sec:applications} and
\ref{sec:sharpness} give the applications and establish sharpness,
respectively.

\section{Preliminaries}
\label{sec:preliminaries}

We begin by fixing notation and recalling Meyerovitch's dynamical dimension.

\subsection{Basic notation}\label{sec:basic-notation}

Throughout the paper, $(X,T)$ is a topological dynamical system: $X$ is a
nonempty compact metrizable space, $d_X$ is a compatible metric on $X$,
and the map $T\colon X\to X$ is a homeomorphism.  For each positive
integer $n$, on $\bbR^n$ we use the
maximum norm, and on the corresponding function space we use the
uniform norm:
\[
   \|v\|_\infty=\max_{1\leq c\leq n}|v_c|,
   \qquad
   \|f-g\|_\infty=\sup_{x\in X}\|f(x)-g(x)\|_\infty.
\]
Diameters of subsets of $\bbR^n$ are taken in the maximum norm.
For positive integers $n,q$, let $M_{n\times q}(\bbR)$ denote the space
of real $n\times q$ matrices.  For $A\in M_{n\times q}(\bbR)$, put
\[
   \|A\|_{\max}=\max_{c,i}|A_{c,i}|.
\]
Also put $\mathbf 1_n=(1,\ldots,1)\in\bbR^n$.

For a topological space $Y$, let
\[
   \Cov(Y):=\{\cU:\cU\text{ is a finite open cover of }Y\}.
\]
For $\cU\in\Cov(X)$, set
\[
   \mesh(\cU)=\max_{U\in\cU}\sup_{x,y\in U}d_X(x,y).
\]
For $\cU\in\Cov(X)$ and a map $F\colon X\to Y$, we say that $F$ is
\emph{$\cU$-separating} if, for every $x,y\in X$,
\[
   F(x)=F(y)\quad\Longrightarrow\quad
   \{x,y\}\subset U\quad\text{for some }U\in\cU.
\]

For any space $Y$, the (left-)shift on $Y^{\bbZ}$ is
\[
   (\sigma z)_k=z_{k+1}
   \qquad(z\in Y^{\bbZ}).
\]
For a positive integer $n$ and $f\in C(X,[0,1]^n)$, define
\[
   I_f\colon X\longrightarrow([0,1]^n)^{\bbZ},
   \qquad
   I_f(x)_k=f(T^k x).
\]
Then $I_f\circ T=\sigma\circ I_f$.  Every continuous equivariant map from $(X,T)$
to this shift is the orbit map of its zeroth-coordinate function.

For a function $\varphi\colon X\to\bbR$, write
\[
   \supp\varphi=\overline{\{x\in X:\varphi(x)\ne0\}}.
\]
For every integer $q\geq1$, let
\[
   \Deltaq
   :=\left\{p=(p_1,\ldots,p_q)\in[0,1]^q:
             \sum_{i=1}^q p_i=1\right\}
\]
be the standard $(q-1)$-simplex.

\subsection{Meyerovitch's dynamical dimension}
\label{sec:dynamical-dimension}

Let $\Prob(X)$ be the space of Borel probability measures on $X$ with
the weak-$*$ topology, and let
\[
   \Prob(X,T)=\{\mu\in\Prob(X):T_*\mu=\mu\},
   \qquad
   T_*\mu(E)=\mu(T^{-1}E)
\]
for every Borel set $E$.  The set $\Prob(X,T)$ is nonempty, compact, and
convex.  For $\cV\in\Cov(X)$, put
\[
\begin{aligned}
   \ord(\cV,x)&=\#\{V\in\cV:x\in V\}-1,\\
   \ord(\cV,T)
   &=\sup_{\mu\in\Prob(X,T)}
      \int_X\ord(\cV,x)\,d\mu(x).
\end{aligned}
\]
Since $\ord(\cV,\cdot)$ is a finite sum of indicators of open sets, it is
lower semicontinuous, so the integral is well defined.
For $\cU,\cV\in\Cov(X)$, write $\cV\succeq\cU$ when $\cV$ refines
$\cU$, meaning that every $V\in\cV$ is contained in some $U\in\cU$.
Define
\[
   \dim(\cU,T)
   =\inf_{\substack{\cV\in\Cov(X)\\
                    \cV\succeq\cU}}
      \ord(\cV,T),
   \qquad
   \dim(X,T)
   =\sup_{\cU\in\Cov(X)}\dim(\cU,T).
\]
This is Meyerovitch's dynamical dimension
\cite[equation~(2)]{Meyerovitch2026}.

\section{Technical tools}
\label{sec:orbit-sum-and-coding}

\subsection{Baire reduction}
\label{sec:coverwise-baire}

Fix a positive integer $n$.  For $\cU\in\Cov(X)$, set
\[
   \mathcal G(\cU)
   :=\bigl\{f\in C(X,[0,1]^n):I_f\text{ is }\cU\text{-separating}\bigr\}.
\]
The next lemma and proposition reduce both embedding theorems to proving
that $\mathcal G(\cU)$ is dense for every finite open cover $\cU$.

\begin{lemma}\label{lem:GU-open}
For every $\cU\in\Cov(X)$, the set $\mathcal G(\cU)$ is open in
$C(X,[0,1]^n)$.
\end{lemma}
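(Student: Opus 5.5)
We show that the complement of $\mathcal G(\cU)$ is closed in $C(X,[0,1]^n)$, using compactness of $X\times X$. The key observation is that the set of ``bad pairs''
\[
   W:=X\times X\setminus\bigcup_{U\in\cU}(U\times U)
\]
is closed in $X\times X$, since each $U\times U$ is open. Hence $W$ is compact. By definition, $f\in\mathcal G(\cU)$ exactly when $I_f(x)\neq I_f(y)$ for every $(x,y)\in W$.

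Fix $f\in\mathcal G(\cU)$. For each $(x,y)\in W$ there is some $k\in\bbZ$ with $f(T^kx)\neq f(T^ky)$. The function
\[
   (x,y)\mapsto\|f(T^kx)-f(T^ky)\|_\infty
\]
is continuous, so the set of pairs where it exceeds half its value at $(x,y)$ is an open neighbourhood of $(x,y)$. Finitely many of these neighbourhoods cover the compact set $W$. This gives finitely many integers $k_1,\ldots,k_m$ and a constant $\varepsilon>0$ such that
\[
   \max_{1\le l\le m}\|f(T^{k_l}x)-f(T^{k_l}y)\|_\infty>2\varepsilon
   \quad\text{for all }(x,y)\in W.
\]
One can obtain this more cleanly by taking the minimum over $W$ of the continuous function $\max_l\|f(T^{k_l}x)-f(T^{k_l}y)\|_\infty$. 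That minimum is positive, and we let $2\varepsilon$ be any number strictly below it.

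Now let $g\in C(X,[0,1]^n)$ satisfy $\|g-f\|_\infty<\varepsilon/2$. For $(x,y)\in W$, pick the index $l$ that realizes the bound above. The triangle inequality gives
\[
   \|g(T^{k_l}x)-g(T^{k_l}y)\|_\infty
   >2\varepsilon-2\cdot\tfrac{\varepsilon}{2}=\varepsilon>0.
\]
So $I_g(x)\neq I_g(y)$ for every bad pair, which means $g\in\mathcal G(\cU)$. Thus $\mathcal G(\cU)$ contains a uniform ball around each of its points, and it is open.

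The only step needing care is the passage from pointwise separation, which uses infinitely many coordinates $k\in\bbZ$, to a uniform positive gap over finitely many coordinates. Compactness of $W$ handles this. Everything else is routine.
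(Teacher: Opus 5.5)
Your proof is correct and follows essentially the same route as the paper. Compactness of the bad-pair set $P_{\cU}$ reduces separation to finitely many time coordinates, and this yields a uniform gap $\delta>0$ that survives perturbations of $f$ of size less than $\delta/3$. The paper uses the increasing windows $D_N^f=\max_{|k|\le N}\|f(T^kx)-f(T^ky)\|_\infty$ where you use finitely many arbitrary coordinates $k_l$. The only point you leave implicit is the trivial case $W=\varnothing$: there the minimum over $W$ is undefined, but $\mathcal G(\cU)$ is the whole space.
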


\begin{proof}
Put
\[
   P_{\cU}:=(X\times X)\setminus\bigcup_{U\in\cU}(U\times U).
\]
This is a compact subset of $X\times X$.  If $P_{\cU}=\varnothing$, then
$\mathcal G(\cU)=C(X,[0,1]^n)$.  Otherwise, let
$f\in\mathcal G(\cU)$.  For $N\geq0$, define
\[
   D_N^f(x,y):=\max_{|k|\leq N}
      \|f(T^k x)-f(T^k y)\|_\infty.
\]
For every $(x,y)\in P_{\cU}$, the orbit maps $I_f(x)$ and $I_f(y)$ are
distinct, so $D_N^f(x,y)>0$ for some $N$.  The open sets
$\{D_N^f>0\}$ increase with $N$ and cover $P_{\cU}$.  Compactness gives
one $N$ for which $D_N^f>0$ throughout $P_{\cU}$, and hence
\[
   \delta:=\min_{(x,y)\in P_{\cU}}D_N^f(x,y)>0.
\]
If $g\in C(X,[0,1]^n)$, then
\[
   D_N^g(x,y)\geq D_N^f(x,y)-2\|g-f\|_\infty.
\]
Thus $\|g-f\|_\infty<\delta/3$ implies $g\in\mathcal G(\cU)$.
\end{proof}

\begin{proposition}\label{prop:baire-reduction}
Suppose that $\mathcal G(\cU)$ is dense in $C(X,[0,1]^n)$ for every
$\cU\in\Cov(X)$.  Then
\[
   \bigl\{f\in C(X,[0,1]^n):I_f\text{ is a topological embedding}\bigr\}
\]
is a dense $G_\delta$ subset of $C(X,[0,1]^n)$.
\end{proposition}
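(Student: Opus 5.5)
The plan is a standard Baire category argument, built on Lemma~\ref{lem:GU-open}. First, $C(X,[0,1]^n)$ with the uniform metric is a closed subset of the Banach space $C(X,\bbR^n)$, hence complete metric, and so it is a Baire space. Choose a sequence $\cU_1,\cU_2,\ldots\in\Cov(X)$ with $\mesh(\cU_m)\to0$; for instance, $\cU_m$ can be a finite subcover of the cover by open balls of radius $1/m$, which gives $\mesh(\cU_m)\leq 2/m$. By Lemma~\ref{lem:GU-open} and the hypothesis, each $\mathcal G(\cU_m)$ is open and dense. The Baire category theorem then makes
\[
   \mathcal G:=\bigcap_{m\geq1}\mathcal G(\cU_m)
\]
a dense $G_\delta$ subset of $C(X,[0,1]^n)$.

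It remains to show that $\mathcal G$ is exactly the set $\mathcal E$ of $f$ for which $I_f$ is a topological embedding.

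For $\mathcal G\subset\mathcal E$, take $f\in\mathcal G$ and suppose $I_f(x)=I_f(y)$. For every $m$, the points $x$ and $y$ lie in a common element of $\cU_m$, so $d_X(x,y)\leq\mesh(\cU_m)\to0$. Hence $x=y$ and $I_f$ is injective. The map $I_f$ is continuous into the product topology on $([0,1]^n)^{\bbZ}$, since each coordinate $f\circ T^k$ is continuous. A continuous injection from a compact space into a Hausdorff space is an embedding.

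For $\mathcal E\subset\mathcal G$, an injective $I_f$ is trivially $\cU$-separating for every $\cU$. The only case to check is $I_f(x)=I_f(y)$ with $x=y$, and then $x$ lies in some element of the cover. So $\mathcal E=\mathcal G$, and $\mathcal E$ is a dense $G_\delta$ set.

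No step here is a real obstacle. The points that need care are the completeness of the function space, which is what justifies using Baire, and the choice of a countable family of covers with mesh tending to zero, which is what turns coverwise separation into pointwise injectivity.
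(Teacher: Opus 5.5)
Your proof is correct and follows the paper's argument: apply Baire category to the open dense sets $\mathcal G(\cU_m)$ for covers with $\mesh(\cU_m)\to0$, and get injectivity from $d_X(x,y)\leq\mesh(\cU_m)\to0$. You also make explicit two points the paper leaves implicit: a continuous injection from compact $X$ into the Hausdorff shift space is an embedding, and the reverse inclusion gives $\bigcap_m\mathcal G(\cU_m)$ equal to the embedding set, so that set is itself a $G_\delta$ rather than merely containing a dense $G_\delta$.
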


\begin{proof}
Choose finite open covers $(\cU_m)_{m\geq1}$ with
$\mesh(\cU_m)\to0$; for example, take a finite subcover of the cover by
open $d_X$-balls of radius $1/(2m)$, so that
$\mesh(\cU_m)\leq1/m$.  By
Lemma~\ref{lem:GU-open} and the assumed
density, every $\mathcal G(\cU_m)$ is open and dense in the complete
metric space $C(X,[0,1]^n)$.  It follows that
\[
   \bigcap_{m=1}^{\infty}\mathcal G(\cU_m)
\]
is a dense $G_\delta$ set.  If $f$ belongs to this intersection and
$I_f(x)=I_f(y)$, then $x$ and $y$ lie in a common member of every
$\cU_m$, so
\[
   d_X(x,y)\leq\mesh(\cU_m)\longrightarrow0.
\]
Thus $x=y$.  We conclude that
\[
   \bigl\{f\in C(X,[0,1]^n):I_f\text{ is a topological embedding}\bigr\}
\]
is a dense $G_\delta$ subset of $C(X,[0,1]^n)$.
\end{proof}

\subsection{A uniform orbit-sum estimate}\label{sec:orbit-sum}

The following lemma converts an invariant-measure estimate into a uniform
estimate on all finite orbit segments.

\begin{lemma}\label{lem:orbit-sum}
Let $u\colon X\to[0,\infty)$ be bounded and upper semicontinuous.  For every
\[
   \rho>\sup_{\mu\in\Prob(X,T)}\int_Xu\,d\mu
\]
there is a constant $C\geq0$ such that
\begin{equation}
   \sum_{j=s}^{t}u(T^j x)
   \leq \rho(t-s+1)+C\label{eq:2.1}
\end{equation}
for every $x\in X$ and every pair of integers $s\leq t$.
\end{lemma}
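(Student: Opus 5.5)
The plan is to first get a uniform bound for one long block and then chain blocks together. For upper semicontinuous $u$ this is the standard semi-uniform ergodic theorem. Fix $\rho$ as in the statement and write $\rho_0:=\sup_{\mu\in\Prob(X,T)}\int_Xu\,d\mu<\rho$. Pick $\rho'$ with $\rho_0<\rho'<\rho$.

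\textbf{Step 1 (a good block length).} The key claim is that there is an $N\geq1$ such that
\[
   \frac1N\sum_{j=0}^{N-1}u(T^jx)<\rho'
   \qquad\text{for every }x\in X.
\]
We argue by contradiction. Otherwise, for every $N$ there is a point $x_N$ with $\frac1N\sum_{j<N}u(T^jx_N)\geq\rho'$. Put $\mu_N:=\frac1N\sum_{j<N}\delta_{T^jx_N}$ and pass to a weak-$*$ limit $\mu$ along a subsequence. The standard Krylov--Bogolyubov argument shows that $\mu\in\Prob(X,T)$.

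We then need $\int u\,d\mu\geq\limsup\int u\,d\mu_N\geq\rho'$. This is where upper semicontinuity enters, and it is the main technical point. A bounded upper semicontinuous function is the decreasing pointwise limit of continuous functions $g_m\geq u$ (Baire's theorem on semicontinuous functions). For each $m$,
\[
   \int g_m\,d\mu=\lim\int g_m\,d\mu_N\geq\limsup\int u\,d\mu_N\geq\rho'.
\]
Monotone convergence then gives $\int u\,d\mu\geq\rho'>\rho_0$, which contradicts the definition of $\rho_0$.

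\textbf{Step 2 (chaining).} Set $M:=\sup u<\infty$ and fix integers $s\leq t$. Write $t-s+1=qN+r$ with $0\leq r<N$. Split $[s,t]$ into the $q$ consecutive blocks $[s+\ell N,\,s+(\ell+1)N-1]$ and a remainder of length $r$.

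Apply Step 1 to each block at the point $T^{s+\ell N}x$. Each block then contributes at most $\rho'N\leq\rho N$, using $u\geq0$ only to be able to work with nonnegative quantities. The remainder contributes at most $rM\leq NM$. Summing,
\[
   \sum_{j=s}^tu(T^jx)\leq\rho\,qN+NM\leq\rho(t-s+1)+NM.
\]
So $C:=NM$ works, uniformly in $x$, $s$ and $t$; this is \eqref{eq:2.1}. Note that we only need $\rho'\leq\rho$ here; the strict gap $\rho'>\rho_0$ is what makes Step 1 possible.

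I expect the only delicate step to be the semicontinuity passage in Step 1. Since $u$ is not continuous, $\int u\,d\mu_N$ need not converge to $\int u\,d\mu$. Upper semicontinuity gives exactly the inequality in the direction we need, through the approximation by continuous $g_m\downarrow u$.
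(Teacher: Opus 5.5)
Your proof is correct and is essentially the paper's argument: the same contradiction via empirical measures, a Krylov--Bogolyubov limit, and upper semicontinuity of $\mu\mapsto\int_X u\,d\mu$. You derive that semicontinuity from continuous $g_m\downarrow u$, where the paper cites the Portmanteau theorem. The only real difference is in the last step. The paper's compactness argument gives the average bound for every $N\geq N_0$, so long intervals need no block decomposition. You instead fix one block length and chain blocks; this tacitly uses $\rho>0$, which holds because $u\geq0$ and $\Prob(X,T)\neq\varnothing$.
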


\begin{proof}
We first claim that there is a positive integer $N_0$ such that
\begin{equation}
   \frac1N\sum_{j=0}^{N-1}u(T^j x)<\rho\label{eq:2.2}
\end{equation}
for every $x\in X$ and every $N\geq N_0$.

If this were false, there would be a sequence
$(N_\ell)_{\ell\geq1}$ of positive integers and a sequence
$(x_\ell)_{\ell\geq1}$ in $X$ such that $N_\ell\to\infty$ and
\[
   \frac1{N_\ell}\sum_{j=0}^{N_\ell-1}u(T^j x_\ell)\geq\rho
   \qquad(\ell\geq1).
\]
Let
\[
   \nu_\ell=\frac1{N_\ell}
      \sum_{j=0}^{N_\ell-1}\delta_{T^j x_\ell}
   \qquad(\ell\geq1).
\]
Here $\delta_y$ denotes the Dirac probability measure at $y\in X$.
After passing to a subsequence, $\nu_\ell$ converges weak-$*$ to a
probability measure $\nu$.  For every $g\in C(X)$,
\[
\begin{aligned}
 \left|\int_Xg\circ T\,d\nu_\ell-\int_Xg\,d\nu_\ell\right|
 &=\frac1{N_\ell}
   \left|g(T^{N_\ell}x_\ell)-g(x_\ell)\right|\\
 &\leq\frac{2\|g\|_\infty}{N_\ell}\longrightarrow0
 \qquad\text{as }\ell\to\infty.
\end{aligned}
\]
Passing to the limit gives $T_*\nu=\nu$.  Since $u$ is bounded and upper
semicontinuous, the map
$\mu\mapsto\int_Xu\,d\mu$ is upper semicontinuous for the weak-$*$
topology by the Portmanteau theorem
\cite[Theorem~2.1]{Billingsley1999}.  Hence
\[
   \rho
   \leq\limsup_{\ell\to\infty}\int_Xu\,d\nu_\ell
   \leq\int_Xu\,d\nu
   \leq\sup_{\mu\in\Prob(X,T)}\int_Xu\,d\mu,
\]
a contradiction.  Thus \eqref{eq:2.2} holds.

Given $s\leq t$, if $t-s+1\geq N_0$, applying \eqref{eq:2.2} to
$T^s x$ gives
\[
   \sum_{j=s}^{t}u(T^j x)<\rho(t-s+1).
\]
If $t-s+1<N_0$, then, since $u\geq0$ and $\rho>0$,
\[
   \sum_{j=s}^{t}u(T^j x)-\rho(t-s+1)
   \leq (t-s+1)\sup_{x\in X}u(x)
   \leq (N_0-1)\sup_{x\in X}u(x),
\]
so \eqref{eq:2.1} holds in all cases with
$C:=(N_0-1)\sup_{x\in X}u(x)$.
\end{proof}

\subsection{Simplex coding}
\label{sec:coding-package}

The next lemma gives the simplex code used in both density arguments.
The definition of $\Lambda(x)$ uses closed supports, ensuring that
$\kappa(x)=|\Lambda(x)|-1$ is upper semicontinuous.

\begin{lemma}
\label{lem:coding-package}
Let $n$ be a positive integer, $\cU\in\Cov(X)$,
$f_0\in C(X,[0,1]^n)$, $\varepsilon>0$, and
$\rho>\dim(X,T)$.  Then there exist
\begin{itemize}[leftmargin=2em]
\item a number $0<\lambda<\min\{1/4,\varepsilon/4\}$ and
      $f_0^\circ=(1-2\lambda)f_0+\lambda\mathbf 1_n$;
\item an integer $q\geq1$, a continuous map
      $\phi\colon X\to\Deltaq$, and the equivariant code
      $I_\phi\colon X\to(\Deltaq)^{\bbZ}$ given by
      $I_\phi(x)_j=\phi(T^j x)$;
\item nonempty sets $\Lambda(x)\subseteq\{1,\ldots,q\}$ and a bounded
      upper semicontinuous function
      $\kappa(x)=|\Lambda(x)|-1$;
\item a constant $C\geq0$ and a matrix
      $B_0\in M_{n\times q}(\bbR)$;
\end{itemize}
such that the following hold.
\begin{enumerate}[label=\textup{(\roman*)},leftmargin=2.6em]
\item If $\phi_i(x)>0$, then $i\in\Lambda(x)$.
\item For every $x\in X$ and every finite integer interval $I$,
\begin{equation}\label{eq:coding-orbit-bound}
   \sum_{j\in I}\kappa(T^j x)\leq\rho|I|+C.
\end{equation}
\item With
\[
   h_0(z):=B_0z_0
   \qquad(z\in(\Deltaq)^{\bbZ}),
\]
one has
\begin{equation}\label{eq:coding-range-approximation}
\begin{aligned}
   h_0((\Deltaq)^{\bbZ})&\subset[\lambda,1-\lambda]^n,\\
   \|h_0(I_\phi(x))-f_0^\circ(x)\|_\infty&<\frac{\varepsilon}{4}
      \qquad(x\in X),\\
   \|f_0^\circ-f_0\|_\infty&\leq\lambda.
\end{aligned}
\end{equation}
\item If $x,y$ do not lie in a common member of $\cU$, then
\begin{equation}\label{eq:lambda-separation}
   \Lambda(x)\cap\Lambda(y)=\varnothing.
\end{equation}
\end{enumerate}
\end{lemma}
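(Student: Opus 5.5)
We need a cover $\cV$ that does two jobs at once. It should refine both $\cU$ and a cover fine enough for $f_0^\circ$ to be nearly constant on each member. It should also have small dynamical order.

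\emph{Choosing the cover.} First fix $\lambda$ with $0<\lambda<\min\{1/4,\varepsilon/4\}$ and set $f_0^\circ=(1-2\lambda)f_0+\lambda\mathbf 1_n$. Then $f_0^\circ$ takes values in $[\lambda,1-\lambda]^n$ and $\|f_0^\circ-f_0\|_\infty\leq\lambda$. By uniform continuity, pick a finite open cover $\cW$ on each member of which $f_0^\circ$ oscillates by less than $\varepsilon/4$ in the maximum norm. Let $\cU\vee\cW$ be the common refinement. Since $\dim(\cU\vee\cW,T)\leq\dim(X,T)<\rho$, there is $\cV=\{V_1,\dots,V_q\}\succeq\cU\vee\cW$ with $\ord(\cV,T)<\rho$.

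\emph{Shrinking to control supports.} By a standard shrinking, choose open sets $V_i'$ with $\overline{V_i'}\subset V_i$ such that $\{V_i'\}$ still covers $X$. Take a partition of unity $(\phi_i)$ with $\supp\phi_i\subset\overline{V_i'}$ and $\phi_i>0$ on $V_i'$; for example, normalize Urysohn functions that are positive exactly on $V_i'$. This gives $\phi\colon X\to\Deltaq$. Define
\[
\Lambda(x)=\{i:x\in\supp\phi_i\}.
\]
This set is nonempty since $\sum_i\phi_i=1$, and (i) holds.

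The function $x\mapsto|\Lambda(x)|$ is a finite sum of indicators of closed sets, so it is upper semicontinuous; hence $\kappa$ is bounded and upper semicontinuous. Moreover $\Lambda(x)\subseteq\{i:x\in V_i\}$, so $\kappa(x)\leq\ord(\cV,x)$. Therefore
\[
\sup_\mu\int\kappa\,d\mu\leq\ord(\cV,T)<\rho,
\]
and Lemma~\ref{lem:orbit-sum} applied to $u=\kappa$ gives $C$ and (ii).

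\emph{Property (iv).} If $i\in\Lambda(x)\cap\Lambda(y)$, then $x,y\in V_i$. Since $V_i$ lies in a member of $\cU$, this proves (iv) by contraposition.

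\emph{Property (iii).} For each $i$ pick a point $x_i\in V_i'$; this is possible for all nonempty $V_i'$, and empty ones can be discarded beforehand. Let $B_0$ have columns $f_0^\circ(x_i)$. Then $h_0(z)=B_0z_0$ is a convex combination of points of $[\lambda,1-\lambda]^n$, which gives the range inclusion. Next,
\[
h_0(I_\phi(x))-f_0^\circ(x)=\sum_i\phi_i(x)\bigl(f_0^\circ(x_i)-f_0^\circ(x)\bigr).
\]
Whenever $\phi_i(x)>0$, both $x$ and $x_i$ lie in $V_i$, which is contained in a member of $\cW$. So each term has norm less than $\varepsilon/4$. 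Compactness of the finitely many closed sets involved then gives the strict bound uniformly.

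The only delicate point is the interplay between closed supports and the order bound. The supports must sit inside the open sets $V_i$, which is why the shrinking step is needed, so that $\kappa\leq\ord(\cV,\cdot)$ holds while $\kappa$ remains upper semicontinuous. Everything else is routine.
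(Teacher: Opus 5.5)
Your proposal is correct and follows the paper's proof essentially step for step: you refine $\cU\vee\cW$ to a cover $\cV$ of small dynamical order, take a partition of unity with $\supp\phi_i\subset V_i$ (your shrinking step is just the standard construction behind the paper's appeal to a subordinate partition of unity), define $\Lambda$ via closed supports so that $\kappa$ is upper semicontinuous with $\kappa\leq\ord(\cV,\cdot)$, apply Lemma~\ref{lem:orbit-sum}, and build $B_0$ from the sample values $f_0^\circ(x_i)$. One small remark: the strict bound $\|h_0(I_\phi(x))-f_0^\circ(x)\|_\infty<\varepsilon/4$ holds directly at each $x$, because it is a convex combination of terms each strictly below $\varepsilon/4$, so no extra compactness argument is needed there.
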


\begin{proof}
Choose $\lambda$ as in the statement and put
\[
   f_0^\circ=(1-2\lambda)f_0+\lambda\mathbf 1_n.
\]
Then
\[
   f_0^\circ(X)\subset[\lambda,1-\lambda]^n,
   \qquad
   \|f_0^\circ-f_0\|_\infty\leq\lambda.
\]
Choose a finite open cover $\cA$ of $X$ such that
\begin{equation}\label{eq:small-oscillation-cover}
   \diam f_0^\circ(A)<\frac{\varepsilon}{4}
   \qquad(A\in\cA).
\end{equation}
Let
\[
   \cW:=\cU\vee\cA
   =\{U\cap A:U\in\cU,\ A\in\cA\}.
\]
Since $\dim(\cW,T)\leq\dim(X,T)<\rho$, there is a finite open
refinement
\[
   \cV=\{V_1,\ldots,V_q\}\succeq\cW
\]
with no empty members and
\begin{equation}\label{eq:coding-order-mean}
   \sup_{\mu\in\Prob(X,T)}
      \int_X\ord(\cV,x)\,d\mu(x)<\rho.
\end{equation}
By the partition of unity, choose a continuous partition of unity
$\phi_1,\ldots,\phi_q$ subordinate to $\cV$:
\[
   \sum_{i=1}^{q}\phi_i=1,
   \qquad
   \supp\phi_i\subset V_i.
\]
For $x\in X$, define
\[
   \phi(x)=(\phi_1(x),\ldots,\phi_q(x)),
   \qquad
   I_\phi(x)=\bigl(\phi(T^j x)\bigr)_{j\in\bbZ},
\]
and set
\[
   \Lambda(x):=\{i:x\in\supp\phi_i\},
   \qquad
   \kappa(x):=|\Lambda(x)|-1.
\]
Observe that
\[
   \{i:\phi_i(x)>0\}\subseteq\Lambda(x).
\]
Each $\supp\phi_i$ is closed, and hence
\[
   \kappa=\sum_{i=1}^{q}\mathbf 1_{\supp\phi_i}-1
\]
is bounded and upper semicontinuous.  If $i\in\Lambda(x)$, then
$x\in\supp\phi_i\subset V_i$.  Therefore
\[
   \kappa(x)\leq\ord(\cV,x).
\]
It follows from~\eqref{eq:coding-order-mean} that
\[
   \sup_{\mu\in\Prob(X,T)}\int_X\kappa\,d\mu<\rho.
\]
Lemma~\ref{lem:orbit-sum} now gives a constant $C\geq0$ such that
\[
   \sum_{j\in I}\kappa(T^j x)\leq\rho|I|+C
\]
for every $x\in X$ and every finite integer interval $I$.

For each $1\leq i\leq q$, choose $U_i\in\cU$, $A_i\in\cA$, and $x_i\in V_i$ such
that
\[
   V_i\subset U_i\cap A_i.
\]
Let $B_0$ be the $n\times q$ matrix whose $i$-th column is
$f_0^\circ(x_i)$, i.e.,
\[
   B_0=\bigl(f_0^\circ(x_1)\ \cdots\ f_0^\circ(x_q)\bigr).
\]
Since $B_0z_0$ is a convex combination of
$f_0^\circ(x_1),\ldots,f_0^\circ(x_q)$ for every $z_0\in\Deltaq$, we have
\[
   h_0((\Deltaq)^{\bbZ})\subset[\lambda,1-\lambda]^n.
\]
If $\phi_i(x)>0$, then $x\in\supp\phi_i\subset V_i$, so both $x$ and
$x_i$ lie in $A_i$.  By~\eqref{eq:small-oscillation-cover},
\[
\begin{aligned}
   \|h_0(I_\phi(x))-f_0^\circ(x)\|_\infty
   &\leq\sum_{i:\phi_i(x)>0}
      \phi_i(x)\|f_0^\circ(x_i)-f_0^\circ(x)\|_\infty\\
   &<\frac{\varepsilon}{4}.
\end{aligned}
\]
This proves~\eqref{eq:coding-range-approximation}.

Finally, suppose $x,y$ do not lie in a common member of $\cU$.  If
$i\in\Lambda(x)\cap\Lambda(y)$, then
$x,y\in\supp\phi_i\subset V_i\subset U_i$, a contradiction.  Thus
\eqref{eq:lambda-separation} holds.
\end{proof}

\section{The qualitative embedding theorem}
\label{sec:warmup}

We first prove Theorem~\ref{thm:warmup} in this section.  The
same finite-window argument will be used for the sharp theorem, with a
more careful choice of columns.

\subsection{Regular matrix}
\label{sec:regularity}

Fix positive integers $q,n$ and a nonnegative integer $R$.  Let
\[
   M=(M_{-R},\ldots,M_R),
   \qquad
   M_r\in M_{n\times q}(\bbR).
\]
Each $M_r$ defines a linear map from $\bbR^q$ to $\bbR^n$.  Later, $q$
will be the number of sets in a refining cover, while $n$ will remain the
dimension of the shift alphabet.
For $z=(z_k)_{k\in\bbZ}\in(\bbR^q)^{\bbZ}$, define the linear map
\begin{equation}\label{eq:convolution}
\begin{aligned}
   \cM_M\colon(\bbR^q)^{\bbZ}&\longrightarrow(\bbR^n)^{\bbZ},\\
   \text{with}\qquad (\cM_Mz)_k&:=\sum_{r=-R}^{R}M_rz_{k+r}.
\end{aligned}
\end{equation}
With block rows indexed by $k$ and block columns indexed by $j$, the
$(k,j)$ block of $\cM_M$ is
\[
   [\cM_M]_{k,j}=
   \begin{cases}
      M_{j-k},&|j-k|\leq R,\\
      0,&|j-k|>R.
   \end{cases}
\]
This means that only the $(k,j)$-blocks with $|k-j|\leq R$ can be nonzero.
For $j\in\bbZ$ and $1\leq i\leq q$, let
$\mathbf e_{j,i}\in(\bbR^q)^{\bbZ}$ be the sequence whose value at time
$j$ is the $i$-th standard basis vector $e_i$ and whose other entries
vanish.  For $(k,c)\in\bbZ\times\{1,\ldots,n\}$, the corresponding
scalar entry is
\[
   (\cM_M\mathbf e_{j,i})_{k,c}
   =
   \begin{cases}
      (M_{j-k})_{c,i},&|j-k|\leq R,\\
      0,&|j-k|>R.
   \end{cases}
\]

\begin{definition}\label{def:r-injective}
Let $n$ be a positive integer and $R$ a nonnegative integer.  Let $F$
be a finite label set and let
\[
   \mathcal F\subset\bbZ\times F,
   \qquad
   \mathcal R\subset\bbZ\times\{1,\ldots,n\}
\]
be finite.  Write $\mathrm t(j,a)=j$.  An injective map
$\pi\colon\mathcal F\to\mathcal R$ is \emph{$R$-injective} if
\[
   |\mathrm t(x)-\mathrm t(\pi(x))|\leq R
   \qquad\text{for all }x\in\mathcal F.
\]
It is \emph{$R$-bijective} if, in addition, $\pi$ is bijective.
\end{definition}

\begin{lemma}\label{lem:banded-uncrossing}
Let $N$ be a positive integer and $R$ a nonnegative integer.  Let
\[
   \xi_1<\cdots<\xi_N,
   \qquad
   \eta_1<\cdots<\eta_N
\]
be real numbers, and let
\[
   j_1\leq\cdots\leq j_N,
   \qquad
   k_1\leq\cdots\leq k_N
\]
be integers.  Suppose that there exists an $R$-bijective map from
\[
   \{(j_p,\xi_p):1\leq p\leq N\}
   \longrightarrow
   \{(k_p,\eta_p):1\leq p\leq N\}.
\]
Then the order-preserving map
\[
   (j_p,\xi_p)\longmapsto(k_p,\eta_p)
   \qquad(1\leq p\leq N)
\]
is $R$-bijective and, among all $R$-bijective maps
$(j_p,\xi_p)\mapsto(k_{\pi(p)},\eta_{\pi(p)})$, uniquely minimizes
\[
   \sum_{p=1}^{N}(\eta_{\pi(p)}-\xi_p)^2.
\]
\end{lemma}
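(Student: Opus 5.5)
We argue by an exchange (uncrossing) argument on permutations of $\{1,\ldots,N\}$. Identify an $R$-bijective map with a permutation $\pi$ sending $(j_p,\xi_p)$ to $(k_{\pi(p)},\eta_{\pi(p)})$; this is legitimate because the $\xi_p$ are distinct, and likewise the $\eta_p$, so the labelled points are distinct. The condition for $\pi$ to be $R$-bijective is $|j_p-k_{\pi(p)}|\leq R$ for all $p$. Write
\[
   \Phi(\pi):=\sum_{p=1}^{N}(\eta_{\pi(p)}-\xi_p)^2 .
\]
The set of $R$-bijective permutations is finite and, by hypothesis, nonempty. So $\Phi$ attains a minimum on this set at some $\pi^\ast$. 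It then suffices to prove the following claim: if $\pi$ is $R$-bijective and not the identity, then there is another $R$-bijective $\pi'$ with $\Phi(\pi')<\Phi(\pi)$. This forces $\pi^\ast=\mathrm{id}$, which gives both conclusions at once: the identity is $R$-bijective, and it is the unique minimizer.

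To prove the claim, take $\pi\neq\mathrm{id}$. Then there is a crossing pair $p<p'$ with $\pi(p)>\pi(p')$; for instance, take $p$ to be the first index with $\pi(p)\neq p$. Let $\pi'=\pi\circ(p\ p')$ be the permutation obtained by swapping the two targets.

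\emph{Strict decrease of $\Phi$.} Set $a=\pi(p')<b=\pi(p)$, so that $\eta_a<\eta_b$ and $\xi_p<\xi_{p'}$. Only the terms at $p$ and $p'$ change, and after the squares of the $\eta$'s and $\xi$'s cancel,
\[
   \Phi(\pi)-\Phi(\pi')=2(\eta_b-\eta_a)(\xi_{p'}-\xi_p)>0 .
\]
This is the usual rearrangement inequality.

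\emph{$R$-admissibility of $\pi'$.} This is the main point. We know four things:
\[
   |j_p-k_b|\leq R,\qquad |j_{p'}-k_a|\leq R,\qquad j_p\leq j_{p'},\qquad k_a\leq k_b ,
\]
and we must show $|j_p-k_a|\leq R$ and $|j_{p'}-k_b|\leq R$. Since the intervals $[j-R,j+R]$ are monotone in $j$, we argue as follows.

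For the first inequality: $k_a\leq k_b\leq j_p+R$, and $k_a\geq j_{p'}-R\geq j_p-R$.

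For the second inequality: $k_b\geq k_a\geq j_{p'}-R$, and $k_b\leq j_p+R\leq j_{p'}+R$.

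Both hold, so $\pi'$ is $R$-bijective. This completes the claim, and with it the lemma. Note that ties among the $j$'s or among the $k$'s cause no trouble, because strictness comes only from the $\xi$'s and $\eta$'s.
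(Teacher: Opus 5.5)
Your proof is correct and follows essentially the same route as the paper. Both use the same crossed-pair exchange, the same four inequalities showing that the swap stays within the band, and the same identity $2(\eta_b-\eta_a)(\xi_{p'}-\xi_p)>0$ for the strict decrease; the only difference is that you start from a minimizer and show it must be the identity, while the paper runs the uncrossing procedure until it terminates.
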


\begin{proof}
Suppose an $R$-bijective map has crossed pairs
\[
   (j_{p_1},\xi_{p_1})\longmapsto(k_{s_2},\eta_{s_2}),
   \qquad
   (j_{p_2},\xi_{p_2})\longmapsto(k_{s_1},\eta_{s_1}),
   \qquad\text{with }p_1<p_2\text{ and }s_1<s_2.
\]
We claim that exchanging the images of such a crossed pair preserves
$R$-bijectivity.  Indeed,
$j_{p_1}\leq j_{p_2}$ and $k_{s_1}\leq k_{s_2}$ imply
\[
\begin{aligned}
   j_{p_1}-k_{s_1}&\leq j_{p_2}-k_{s_1}\leq R,
   &k_{s_1}-j_{p_1}&\leq k_{s_2}-j_{p_1}\leq R,\\
   j_{p_2}-k_{s_2}&\leq j_{p_2}-k_{s_1}\leq R,
   &k_{s_2}-j_{p_2}&\leq k_{s_2}-j_{p_1}\leq R.
\end{aligned}
\]
The exchange strictly decreases
$\sum_{p=1}^{N}(\eta_{\pi(p)}-\xi_p)^2$, since
\[
\begin{aligned}
 & (\eta_{s_2}-\xi_{p_1})^2+(\eta_{s_1}-\xi_{p_2})^2
   -(\eta_{s_1}-\xi_{p_1})^2-(\eta_{s_2}-\xi_{p_2})^2\\
 &\hspace{4cm}=2(\xi_{p_2}-\xi_{p_1})(\eta_{s_2}-\eta_{s_1})>0.
\end{aligned}
\]
There are only finitely many $R$-bijective maps, and each uncrossing
strictly decreases the cost.  The procedure therefore terminates at the
unique inversion-free map
$(j_p,\xi_p)\mapsto(k_p,\eta_p)$ for $1\leq p\leq N$.  A
minimizing $R$-bijective map cannot contain an inversion, proving
uniqueness.
\end{proof}

\begin{definition}
\label{def:regular}
Let $q,n$ be positive integers and $R$ a nonnegative integer.  A
coefficient family
\[
   M=(M_{-R},\ldots,M_R)
   \in\bigl(M_{n\times q}(\bbR)\bigr)^{2R+1}
\]
is
\emph{regular} if the following holds.  Whenever
$E\subset\bbZ\times\{1,\ldots,q\}$ is finite and nonempty,
$\mathcal R\subset\bbZ\times\{1,\ldots,n\}$ is finite,
$|E|=|\mathcal R|$, and there is an $R$-bijective map from $E$ to
$\mathcal R$, the square matrix
\[
   \bigl[(\cM_M\mathbf e_{j,i})_{k,c}\bigr]_{
      (k,c)\in\mathcal R,\ (j,i)\in E}
\]
is invertible.
\end{definition}

\begin{proposition}
\label{prop:generic-regular}
For positive integers $q,n$ and a nonnegative integer $R$, the
regular coefficient families form a dense $G_\delta$ subset of
$\bigl(M_{n\times q}(\bbR)\bigr)^{2R+1}$.
\end{proposition}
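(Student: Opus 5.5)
The plan is to write the set of regular families as a countable intersection of open dense sets, one for each combinatorial configuration, and then apply the Baire category theorem in the finite-dimensional space $\bigl(M_{n\times q}(\bbR)\bigr)^{2R+1}$.

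\emph{Reduction to countably many configurations.} Translating $E$ and $\mathcal R$ together by the same time shift does not change the matrix, since its entries depend only on $j-k$. So a configuration can be described by a pair $(E,\mathcal R)$ of finite sets with an $R$-bijection between them. There are countably many such pairs; in fact there are finitely many up to translation for each size $N=|E|$. For each configuration, let $P_{E,\mathcal R}(M)$ be the determinant of the square matrix in Definition~\ref{def:regular}, with rows and columns ordered in some fixed way. This is a polynomial in the entries of the $M_r$. The set $\{P_{E,\mathcal R}\neq0\}$ is open, and it is the complement of the zero set of a polynomial. Hence it is dense as soon as $P_{E,\mathcal R}$ is not the zero polynomial. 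The regular families are exactly the intersection of these sets, which is then a dense $G_\delta$.

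\emph{Nonvanishing of each determinant.} This is the main step. I would expand $P_{E,\mathcal R}$ as a sum over bijections $\pi\colon E\to\mathcal R$. Each term is a signed product of entries $(M_{j-k})_{c,i}$, where $(k,c)=\pi(j,i)$, and the term vanishes identically unless $\pi$ is $R$-bijective. The difficulty is that different bijections can yield the same monomial, because the variables are shared across different positions with the same offset $j-k$. So the terms might cancel.

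To rule this out, I would use Lemma~\ref{lem:banded-uncrossing} through a weighting trick. Fix the entries of the $M_r$ up to a weight: specialize
\[
   (M_r)_{c,i}=t^{-(\eta_c-\xi_i-r)^2},
\]
or use some similar exponential specialization in a parameter $t$. Here the reals $\xi_i$ and $\eta_c$ are chosen generically, so that all the points $(j,\xi_i)$, ordered lexicographically, are compatible with the ordering used in the lemma. More precisely, I would choose real labels $\tilde\xi_{(j,i)}=j+\xi_i$ and $\tilde\eta_{(k,c)}=k+\eta_c$ with $\xi_i,\eta_c\in(0,1)$ distinct. Then the time-ordering $j_1\le\dots\le j_N$ is consistent with the real ordering.

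Under a specialization of the form $(\cM_M\mathbf e_{j,i})_{k,c}=t^{(\tilde\eta_{(k,c)}-\tilde\xi_{(j,i)})^2}$ for $|j-k|\le R$, each $R$-bijective $\pi$ contributes $\pm t^{\sum(\tilde\eta_{\pi(p)}-\tilde\xi_p)^2}$. Note that $(\tilde\eta-\tilde\xi)^2$ depends only on $k-j$, $c$ and $i$, so this is a legitimate choice of $M_{j-k}$. By Lemma~\ref{lem:banded-uncrossing}, the order-preserving map is $R$-bijective and uniquely minimizes the exponent. So as $t\to\infty$, or equivalently by looking at the lowest power as $t\to0^+$, the determinant is dominated by a single term and is nonzero for small $t>0$. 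Therefore $P_{E,\mathcal R}$ is not identically zero.

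Two technical points need care. The first is that the lemma requires strict orderings $\xi_1<\dots<\xi_N$, which genericity of the labels ensures. The second is checking that the exponent assignment depends only on $(j-k,c,i)$; it does, since $(k-j+\eta_c-\xi_i)^2$ has this form. With those verified, the Baire intersection gives the proposition.
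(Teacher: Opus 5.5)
Your proposal is correct and follows essentially the same route as the paper: specialize each entry to a power of a single parameter, with exponent equal to the squared distance between real labels of input and output coordinates, use Lemma~\ref{lem:banded-uncrossing} to get a unique minimal term, and intersect the countably many open dense nonvanishing sets. The paper uses the integer labels $Lj+i$ and $Lk+q+c$ with $L>q+n$ and reads off the lowest-degree monomial of a one-variable polynomial; your real labels $j+\xi_i$, $k+\eta_c$ with an asymptotic argument in $t$ are equivalent. One harmless slip is your aside that there are only finitely many configurations of each size up to translation: this is false, since the time gaps inside $E$ and $\mathcal R$ can be arbitrarily large, but only countability is used, so nothing breaks.
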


\begin{proof}
Fix nonempty finite sets $E$ and $\mathcal R$ of equal cardinality for which an
$R$-bijective map exists.  The determinant of the associated square
matrix is a polynomial in the entries of $M_{-R},\ldots,M_R$.  We show
that it is not the zero polynomial.

Choose an integer $L>q+n$ and specialize $M$ to $\widetilde M$ by setting
\[
   (\widetilde M_r)_{c,i}
   :=X^{(-Lr+q+c-i)^2}
   \qquad(|r|\leq R).
\]
For $(j,i)\in E$ and $(k,c)\in\mathcal R$, set
\[
   \xi_{j,i}:=Lj+i,
   \qquad
   \eta_{k,c}:=Lk+q+c.
\]
Since $L>q+n$,
\[
\begin{aligned}
   \xi_{j,i}<\xi_{j',i'}
   &\iff j<j'\ \text{or}\ (j=j'\ \text{and}\ i<i'),\\
   \eta_{k,c}<\eta_{k',c'}
   &\iff k<k'\ \text{or}\ (k=k'\ \text{and}\ c<c').
\end{aligned}
\]
Moreover,
\[
   (-L(j-k)+q+c-i)^2=(\eta_{k,c}-\xi_{j,i})^2.
\]
Thus, for every bijection $\pi\colon E\to\mathcal R$,
\[
   \prod_{(j,i)\in E}
      (\cM_{\widetilde M}\mathbf e_{j,i})_{\pi(j,i)}
   =
   \begin{cases}
      X^{\sum_{(j,i)\in E}
         (\eta_{\pi(j,i)}-\xi_{j,i})^2},
         &\pi\text{ is $R$-bijective},\\
      0,&\text{otherwise}.
   \end{cases}
\]
By Lemma~\ref{lem:banded-uncrossing}, the order-preserving bijection is
$R$-bijective and uniquely minimizes
$\sum_{(j,i)\in E}(\eta_{\pi(j,i)}-\xi_{j,i})^2$.
Hence the specialized
determinant has a unique lowest-degree monomial, with coefficient
$\pm1$.  It is nonzero, so the original determinant polynomial is
nonzero.

There are only countably many such nonempty pairs $(E,\mathcal R)$.  For each one, the
complement of the zero set of its nonzero determinant polynomial is
open and dense.  Their countable intersection is the required dense
$G_\delta$ set.
\end{proof}

\subsection{A finite-window matching lemma}
\label{sec:matching}

For integers $a\leq b$, write $[a,b]=\{a,a+1,\ldots,b\}\subset\bbZ$.
We use Hall's theorem in the following standard form.

\begin{hall}[\cite{Hall1935}]
Let $G$ be a finite bipartite graph with bipartition $(V_L,V_R)$.  For
$S\subseteq V_L$, let $\Gamma_G(S)$ be its neighborhood.  Then $G$ has
a matching covering $V_L$ if and only if
$|S|\leq|\Gamma_G(S)|$ for every $S\subseteq V_L$.
\end{hall}

The parameter $\iota\in\{0,1\}$ records whether an extra formal column
is present.  We take $\iota=0$ in the qualitative proof and
$\iota=1$ in the sharp proof.

\begin{lemma}\label{lem:finite-window-matching}
Let $n$ be a positive integer, let $(k_j)_{j\in\bbZ}$ be nonnegative
integers, and suppose that there are constants $0\leq\alpha<n$ and
$C\geq0$ such that
\begin{equation}\label{eq:window-density}
   \sum_{j\in I}k_j\leq\alpha|I|+C
\end{equation}
for every finite integer interval $I$.  Let $D\subset\bbZ$ be nonempty,
let $\iota\in\{0,1\}$, and choose an integer $R\geq1$ such that
\[
   2nR>C+\iota.
\]
Then there are integers $A\leq B$ such that, with
\[
   J=[A,B],
   \qquad
   D\cap J\ne\varnothing,
   \qquad
   d_*:=\min(D\cap J),
   \qquad
   K=[A+R,B-R],
\]
one has $|J|>2R$; in particular, $K$ is nonempty.  Put
\[
   \mathcal S_\iota=
   \begin{cases}
      \varnothing,&\iota=0,\\
      \{(d_*,\star)\},&\iota=1,
   \end{cases}
\]
and define
\[
   \mathcal L_J
   :=\{(j,r):j\in J,\ 1\leq r\leq k_j\}
      \mathbin{\sqcup}\mathcal S_\iota.
\]
Then $\mathcal L_J$ admits an $R$-injective map into
$K\times\{1,\ldots,n\}$.
\end{lemma}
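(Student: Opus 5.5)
The plan is to apply Hall's Marriage Theorem to the bipartite graph $G$ with left side $V_L=\mathcal L_J$ and right side $V_R=K\times\{1,\ldots,n\}$. A left vertex of time $j$ (with $(d_*,\star)$ of time $d_*$) is joined to $(k,c)$ exactly when $|j-k|\leq R$. A matching covering $V_L$ is precisely an $R$-injective map. So the whole task is to choose $A,B$ so that Hall's condition holds.

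\emph{Reduction of Hall's condition to intervals.} For $S\subseteq V_L$, let $\tau(S)\subset J$ be its set of times. Then $\Gamma_G(S)=\bigl((\tau(S)+[-R,R])\cap K\bigr)\times\{1,\ldots,n\}$. Split $\tau(S)$ into maximal blocks whose consecutive gaps are at most $2R$, and replace each block by its integer hull $I=[a,b]\subseteq J$. This does not change the neighborhood. It only increases the left count to $\sum_{j\in I}k_j+\iota\,\mathbf 1[d_*\in I]$. Blocks with gaps larger than $2R$ have disjoint thickened neighborhoods. Hence it suffices to verify, for every interval $I=[a,b]\subseteq J$,
\[
   \sum_{j\in I}k_j+\iota\,\mathbf 1[d_*\in I]\leq n\,\bigl|[a-R,b+R]\cap K\bigr|.
\]
The right side depends on how $I$ sits in $J$, and I would treat four cases.
\begin{itemize}
\item \emph{Interior:} $a\geq A+2R$ and $b\leq B-2R$. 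The right side is $n(|I|+2R)$, and the claim follows from \eqref{eq:window-density}, $\alpha<n$ and $2nR>C+\iota$.
\item \emph{Left-truncated:} $a<A+2R$ but $b\leq B-2R$. The right side is $n(b-A+1)$, which is at least $n(b-a+1)$.
\item \emph{Right-truncated:} the mirror image of the previous case.
\item \emph{Both ends truncated:} the right side is $n|K|=n(|J|-2R)$.
\end{itemize}

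\emph{Choice of $A$ and $B$.} Set $S(t)=\sum_{j<t}(k_j-n)$, normalized so that $S(0)=0$. By \eqref{eq:window-density}, $S(t)-S(s)\leq(\alpha-n)(t-s)+C$ for $s<t$, so $S\to-\infty$ as $t\to+\infty$ and $S\to+\infty$ as $t\to-\infty$, both linearly.

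Fix $d\in D$ and a large parameter $L$.
\begin{itemize}
\item Let $A$ be the largest maximizer of $S$ on $(-\infty,d-L]$ intersected with $[d-L-L',\infty)$, for suitable $L'$. Since $S$ is integer-valued, $A$ is a strict record point: $S(t)\leq S(A)-1$ for all $t>A$ up to the right end.
\item Choose $B\geq d+L$ symmetrically, as a strict record point of the reversed partial sums $\sum_{j>t}(k_j-n)$.
\end{itemize}
The strict record at $A$ gives, for every $b$,
\[
   \sum_{j=A}^{b}k_j\leq n(b-A+1)-1.
\]
Because $k_j\geq0$, this bounds $\sum_{j=a}^{b}k_j+\iota$ for every $a\geq A$, which settles the left-truncated case; the right-truncated case is symmetric. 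For the both-ends case, I use $\sum_{J}k_j+\iota\leq\alpha|J|+C+1\leq n(|J|-2R)$, which holds once $|J|\geq(C+1+2nR)/(n-\alpha)$. Taking $L$ large forces this inequality, gives $|J|>2R$, and puts $d\in D\cap J$.

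\emph{Expected obstacle.} The main difficulty is to make the two record conditions compatible, and to ensure each record is taken with respect to the relevant range inside $J$ rather than all of $\bbZ$. I would first choose $B$ far to the right of $d$. Then I would take $A$ as the strict maximizer of $S$ on $[d-L-L',\,d-L]$, and check that the linear drift (with fluctuation at most $C$) makes $S(A)$ dominate all values $S(t)$ for $t\in(A,B+1]$, not merely those near $A$. Here $L'$ is large compared with $C/(n-\alpha)$. Then I do the same for $B$ relative to $[A,B]$. The slack of $1$ needed for the extra column $\iota=1$ comes from the strictness of the records, and in the interior case from $2nR>C+\iota$.
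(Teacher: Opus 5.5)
Your proposal is correct and follows the paper's proof: Hall's theorem, reduction of the marriage condition to interval components with the same four boundary cases, and strict records of the integer-valued partial sums (your $S$ is the paper's $-Q$) supplying the unit of slack for the extra column. The only difference is that you locate $A$ and $B$ in bounded windows. There, the record property for all $t>A$ comes from comparing $S(t)$, for $t>d-L$, with $S(d-L-L')\le S(A)$, using $L'>C/(n-\alpha)$, and symmetrically for $B$; drift measured from $A$ itself would not suffice. The paper instead takes $A$ as the last minimizer of $Q$ on a half-line and $B+1$ as a first-passage time, which makes both records automatic.
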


Figure~\ref{fig:matching-lemma} illustrates the matching when the extra
formal column is present.
\begin{figure}[htbp]
   \centering
   \includegraphics[width=0.92\textwidth]{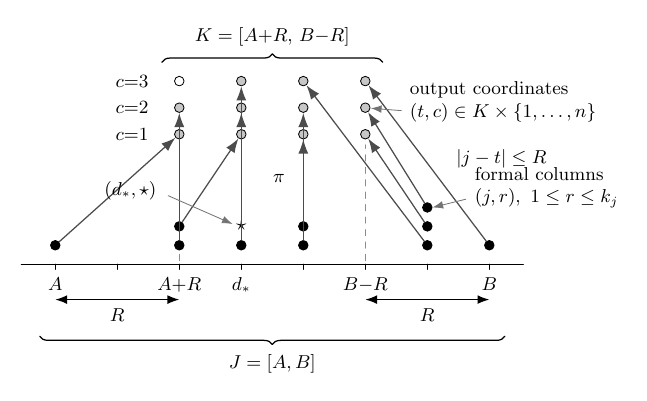}
   \caption{The finite-window matching when $n=3$, $R=2$, and
   $\iota=1$.  Filled dots represent the columns $(j,r)$, the star is
   the additional column $(d_*,\star)$, and the circles are the output
   coordinates over $K=[A+R,B-R]$.  Each arrow changes the time by at
   most $R$.  For $\iota=0$, the star and its arrow are omitted.}
   \label{fig:matching-lemma}
\end{figure}

\begin{proof}
\textbf{Step 1: Window selection.}
Define an integer-valued function $Q\colon\bbZ\to\bbZ$ by $Q(0)=0$ and
\[
   Q(t)-Q(s)=\sum_{j=s}^{t-1}(n-k_j)
   \qquad(s<t).
\]
By~\eqref{eq:window-density},
\begin{equation}\label{eq:window-growth}
   Q(t)-Q(s)\geq(n-\alpha)(t-s)-C.
\end{equation}
Taking $s=0$ or $t=0$ in~\eqref{eq:window-growth} shows that
\begin{equation}\label{eq:window-limits}
   Q(t)\longrightarrow+\infty\quad(t\to+\infty),
   \qquad
   Q(t)\longrightarrow-\infty\quad(t\to-\infty).
\end{equation}

Choose $d_0\in D$.  By~\eqref{eq:window-limits}, the function $Q$ attains a minimum
$m$ on $\{t\in\bbZ:t\geq d_0+1\}$.  Choose $N<d_0$ with $Q(N)<m$, and
let $A$ be the largest minimizer of $Q$ on
$\{t\in\bbZ:t\geq N\}$.  This minimizer exists, and there are only
finitely many of them, because $Q(t)\to+\infty$ as $t\to+\infty$.
Moreover, $A\leq d_0$: otherwise $Q(A)\geq m>Q(N)$, contrary to the
choice of $A$.  Since $A$ is the last minimizer, we have
\begin{equation}\label{eq:window-left-record}
   Q(t)>Q(A)\qquad(t>A).
\end{equation}
In particular, we obtain the well-defined integer
\[
   d_*:=\min(D\cap[A,\infty))
\]
Choose $L_0\geq\max\{A,d_*\}$ so large that
\[
   (n-\alpha)(L_0-A+1)>2nR+C+\iota.
\]
Put $M_0=\max_{A\leq t\leq L_0}Q(t)$, let $u>L_0$ be the first integer
with $Q(u)>M_0$, and set $B=u-1$.  Such a $u$ exists by
\eqref{eq:window-limits}.  The definition of $u$ gives
\begin{equation}\label{eq:window-right-record}
   Q(B+1)>Q(t)\qquad(A\leq t\leq B),
\end{equation}
and $B\geq L_0\geq d_*$.  Since $B\geq L_0$, the choice of $L_0$
also gives
\begin{equation}\label{eq:window-global-slack}
   (n-\alpha)|J|>2nR+C+\iota,
   \qquad
   |J|>2R.
\end{equation}
Here $|J|>2R$ follows from $0<n-\alpha\leq n$.  Finally,
$B\geq d_*$ and the definition of $d_*$ show that
$d_*=\min(D\cap J)$.

\textbf{Step 2: Boundary estimates.}
Since $Q$ is integer-valued,~\eqref{eq:window-left-record} gives, for
$A\leq b\leq B$,
\[
\begin{aligned}
   1
   &\leq Q(b+1)-Q(A)\\
   &=n(b-A+1)-\sum_{j=A}^{b}k_j.
\end{aligned}
\]
Similarly,~\eqref{eq:window-right-record} gives, for $A\leq a\leq B$,
\[
\begin{aligned}
   1
   &\leq Q(B+1)-Q(a)\\
   &=n(B-a+1)-\sum_{j=a}^{B}k_j.
\end{aligned}
\]
Consequently,
\begin{align}
   \sum_{j=A}^{b}k_j+1&\leq n(b-A+1)
      &&(A\leq b\leq B),\label{eq:window-left-boundary}\\
   \sum_{j=a}^{B}k_j+1&\leq n(B-a+1)
      &&(A\leq a\leq B).\label{eq:window-right-boundary}
\end{align}

\textbf{Step 3: Marriage condition.}
Let $G$ be the bipartite graph that joins
$(j,r)\in\mathcal L_J$ to $(t,c)\in K\times\{1,\ldots,n\}$ when
$|j-t|\leq R$, where the time of $(d_*,\star)$ is $d_*$.  For a left
vertex $\ell$, its neighboring times form the interval
\[
   \Gamma_\ell=[\mathrm t(\ell)-R,\mathrm t(\ell)+R]\cap K.
\]
It is nonempty because $\mathrm t(\ell)\in[A,B]$, while the endpoints of
$K$ are $A+R$ and $B-R$.
Fix $\mathcal S\subseteq\mathcal L_J$.  Let $\mathcal P$ be the collection
of maximal consecutive components of
$\bigcup_{\ell\in\mathcal S}\Gamma_\ell$.  Since each $\Gamma_\ell$ is
an integer interval, it is contained in a unique member of $\mathcal P$.
For each $P\in\mathcal P$, let
\[
   \mathcal S_P:=\{\ell\in\mathcal S:\Gamma_\ell\subseteq P\},
\]
and put
\[
   a=\min_{\ell\in\mathcal S_P}\mathrm t(\ell),
   \qquad
   b=\max_{\ell\in\mathcal S_P}\mathrm t(\ell),
   \qquad
   \ell_P=b-a+1.
\]
Then
\[
   P=[a-R,b+R]\cap K
\]
and
\begin{equation}\label{eq:window-component-count}
   |\mathcal S_P|
   \leq\sum_{j=a}^{b}k_j
      +\iota\mathbf 1_{\{d_*\in[a,b]\}}.
\end{equation}
The neighborhood of $\mathcal S_P$ in $G$ is
$P\times\{1,\ldots,n\}$ and has $n|P|$ vertices.

If $P$ contains no endpoint of $K$, then $|P|=\ell_P+2R$, and
\[
   |\mathcal S_P|
   \leq\alpha\ell_P+C+\iota
   <n(\ell_P+2R)=n|P|.
\]
If $P$ contains the left endpoint $A+R$ of $K$ but not the right
endpoint $B-R$, then
$P=[A+R,b+R]$ and, since $\iota\leq1$,
\[
   |\mathcal S_P|
   \leq\sum_{j=A}^{b}k_j+\iota
   \leq\sum_{j=A}^{b}k_j+1
   \leq n|P|
\]
by~\eqref{eq:window-left-boundary}.  The right-endpoint case follows
from~\eqref{eq:window-right-boundary}.  If $P$ contains both endpoints
$A+R$ and $B-R$ of $K$,
then $P=K$ and~\eqref{eq:window-density} together with
\eqref{eq:window-global-slack} gives
\[
   |\mathcal S_P|
   \leq\alpha|J|+C+\iota
   <n(|J|-2R)=n|K|.
\]
The members of $\mathcal P$ are disjoint and the sets $\mathcal S_P$
partition $\mathcal S$.  By the definition of $G$,
\[
   \Gamma_G(\mathcal S)
   =\left(\bigcup_{\ell\in\mathcal S}\Gamma_\ell\right)
      \times\{1,\ldots,n\}
   =\bigsqcup_{P\in\mathcal P}P\times\{1,\ldots,n\}.
\]
Hence
\[
   |\mathcal S|
   =\sum_{P\in\mathcal P}|\mathcal S_P|
   \leq\sum_{P\in\mathcal P} n|P|
   =|\Gamma_G(\mathcal S)|.
\]
By Hall's Marriage Theorem, there exists an $R$-injective map from
$\mathcal L_J$ into $K\times\{1,\ldots,n\}$.
\end{proof}

\subsection{Finite-band perturbations}
\label{sec:finite-band-perturbation}

\begin{lemma}
\label{lem:finite-band-perturbation}
Let $n$ be a positive integer, $\cU\in\Cov(X)$,
$f_0\in C(X,[0,1]^n)$, $\varepsilon>0$, and
$\rho>\dim(X,T)$.  Fix $\lambda,q,\phi,I_\phi$, and $B_0$ in
Lemma~\ref{lem:coding-package}, and let $R$ be a
nonnegative integer.
Set $\widehat M=(\widehat M_{-R},\ldots,\widehat M_R)$, where
\[
   \widehat M_0=B_0,
   \qquad
   \widehat M_r=0\quad(0<|r|\leq R).
\]
If $M=(M_{-R},\ldots,M_R)
\in\bigl(M_{n\times q}(\bbR)\bigr)^{2R+1}$ satisfies
\begin{equation}\label{eq:coefficient-closeness}
   \sum_{r=-R}^{R}\|M_r-\widehat M_r\|_{\max}
   <\min\left\{\frac{\lambda}{2},\frac{\varepsilon}{4}\right\},
\end{equation}
then
\[
   f(x):=\sum_{r=-R}^{R}M_r\phi(T^r x)
\]
defines a member of $C(X,[0,1]^n)$ with
\[
   \|f-f_0\|_\infty<\varepsilon.
\]
Moreover,
\begin{equation}\label{eq:orbit-convolution}
   f(T^k x)=(\cM_M I_\phi(x))_k
   \qquad(x\in X,\ k\in\bbZ).
\end{equation}
\end{lemma}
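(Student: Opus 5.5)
The proof is a direct estimate that splits $f$ into the unperturbed term and a small error. The identity \eqref{eq:orbit-convolution} is immediate from the definitions. Replacing $x$ by $T^kx$ gives
\[
   f(T^kx)=\sum_{r=-R}^{R}M_r\phi(T^{k+r}x)=\sum_{r=-R}^{R}M_rI_\phi(x)_{k+r}=(\cM_MI_\phi(x))_k,
\]
which is \eqref{eq:convolution} applied to $z=I_\phi(x)$. Continuity of $f$ also follows at once, since $f$ is a finite sum of continuous maps $x\mapsto M_r\phi(T^rx)$.

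\textbf{Splitting.} I write
\[
   f(x)=B_0\phi(x)+\sum_{r=-R}^{R}(M_r-\widehat M_r)\phi(T^rx),
\]
where the first term is $h_0(I_\phi(x))$. To bound the error term, I use that every $p\in\Deltaq$ has nonnegative entries summing to $1$. Hence, for any $N\in M_{n\times q}(\bbR)$,
\[
   \|Np\|_\infty\le\max_{c}\sum_i|N_{c,i}|p_i\le\|N\|_{\max}.
\]
Summing over $r$ bounds the error by $\sum_r\|M_r-\widehat M_r\|_{\max}$. By \eqref{eq:coefficient-closeness}, this is less than $\min\{\lambda/2,\varepsilon/4\}$.

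\textbf{Range.} By \eqref{eq:coding-range-approximation}, $h_0(I_\phi(x))\in[\lambda,1-\lambda]^n$. Adding an error of sup-norm less than $\lambda/2$ keeps every coordinate within $[\lambda/2,1-\lambda/2]\subset[0,1]$. Hence $f\in C(X,[0,1]^n)$.

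\textbf{Closeness.} The triangle inequality gives
\[
   \|f(x)-f_0(x)\|_\infty\le\|f(x)-h_0(I_\phi(x))\|_\infty+\|h_0(I_\phi(x))-f_0^\circ(x)\|_\infty+\|f_0^\circ(x)-f_0(x)\|_\infty.
\]
The three terms are bounded by $\varepsilon/4$, by $\varepsilon/4$ (strictly, using \eqref{eq:coding-range-approximation}), and by $\lambda<\varepsilon/4$. The total is therefore less than $3\varepsilon/4<\varepsilon$. Because $X$ is compact, the error bound is in fact a maximum, so the strict inequality persists after taking the supremum over $x$.

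There is no real obstacle in this argument. The only point needing care is the elementary estimate $\|Np\|_\infty\le\|N\|_{\max}$ for $p$ in the simplex, which is what allows the $\max$-norm on coefficients in \eqref{eq:coefficient-closeness} to control the uniform norm of the error.
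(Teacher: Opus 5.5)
Your proposal is correct and follows essentially the same route as the paper. You split $f$ into $h_0(I_\phi(x))$ plus an error controlled by $\|Np\|_\infty\le\|N\|_{\max}$ on the simplex, then deduce the range inclusion in $[\lambda/2,1-\lambda/2]^n$ and the triangle-inequality bound $\varepsilon/4+\varepsilon/4+\lambda<\varepsilon$; the only difference is that the paper phrases the error estimate for $h_M$ on all of $(\Deltaq)^{\bbZ}$ rather than pointwise at $I_\phi(x)$.
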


\begin{proof}
For $z\in(\Deltaq)^{\bbZ}$, put
\[
   h_M(z):=\sum_{r=-R}^{R}M_rz_r.
\]
Since $\|Ap\|_\infty\leq\|A\|_{\max}$ for every $p\in\Deltaq$,
\[
   \|h_M(z)-h_0(z)\|_\infty
   \leq\sum_{r=-R}^{R}\|M_r-\widehat M_r\|_{\max}.
\]
Thus~\eqref{eq:coefficient-closeness} and
\eqref{eq:coding-range-approximation} imply
\[
   h_M((\Deltaq)^{\bbZ})
   \subset[\lambda/2,1-\lambda/2]^n
   \subset[0,1]^n.
\]
We conclude that $f=h_M\circ I_\phi\in C(X,[0,1]^n)$.  Moreover,
\[
\begin{aligned}
   \|f-f_0\|_\infty
   &\leq\|h_M\circ I_\phi-h_0\circ I_\phi\|_\infty
      +\|h_0\circ I_\phi-f_0^\circ\|_\infty
      +\|f_0^\circ-f_0\|_\infty\\
   &<\frac{\varepsilon}{4}+\frac{\varepsilon}{4}+\lambda
   <\varepsilon.
\end{aligned}
\]
\end{proof}

The construction is summarized in Figure~\ref{fig:f-construction}.  In
the qualitative proof the coefficient family is
regular; in the sharp proof it is superregular.
\begin{figure}[htbp]
   \centering
   \includegraphics[width=0.75\textwidth]{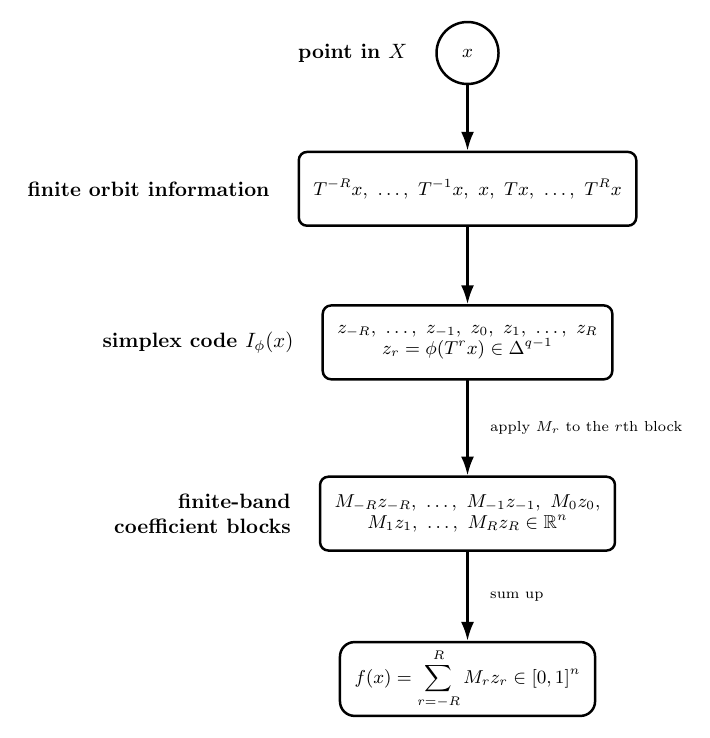}
   \caption{The construction of the perturbed map.}
   \label{fig:f-construction}
\end{figure}

\subsection{The qualitative density argument}
\label{sec:warmup-proof}

The qualitative theorem follows from the next coverwise density
statement, whose numerical hypothesis is $2\rho+2<n$.

\begin{proposition}
\label{prop:warmup-density}
Let $n$ be a positive integer and suppose that there is a number $\rho$
such that
\[
   \dim(X,T)<\rho,
   \qquad
   2\rho+2<n.
\]
Then $\mathcal G(\cU)$ is dense in $C(X,[0,1]^n)$ for every
$\cU\in\Cov(X)$.
\end{proposition}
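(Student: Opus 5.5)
Given $\cU\in\Cov(X)$, $f_0\in C(X,[0,1]^n)$ and $\varepsilon>0$, I will produce $f\in\mathcal G(\cU)$ with $\|f-f_0\|_\infty<\varepsilon$. First apply Lemma~\ref{lem:coding-package} with these data and $\rho$, obtaining $\lambda,q,\phi,\Lambda,\kappa,C,B_0$. Put $\alpha:=2\rho+2<n$ and choose an integer $R\geq1$ with $2nR>2C$ (this is the case $\iota=0$ of Lemma~\ref{lem:finite-window-matching} with constant $2C$). By Proposition~\ref{prop:generic-regular}, regular families are dense, so pick a regular $M$ satisfying \eqref{eq:coefficient-closeness} relative to $\widehat M$. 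Let $f(x)=\sum_r M_r\phi(T^rx)$. By Lemma~\ref{lem:finite-band-perturbation}, $f\in C(X,[0,1]^n)$, $\|f-f_0\|_\infty<\varepsilon$, and \eqref{eq:orbit-convolution} holds. It remains to show that $I_f$ is $\cU$-separating.

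Suppose $I_f(x)=I_f(y)$ but $x,y$ lie in no common member of $\cU$. Set $w:=I_\phi(x)-I_\phi(y)\in(\bbR^q)^{\bbZ}$. Then $\cM_Mw=0$ by \eqref{eq:orbit-convolution}. Let $E_j=\{i:w_{j,i}\neq0\}$ and $k_j=|E_j|$. By (i) of Lemma~\ref{lem:coding-package}, $E_j\subseteq\Lambda(T^jx)\cup\Lambda(T^jy)$, so $k_j\leq\kappa(T^jx)+\kappa(T^jy)+2$. Applying (ii) to both $x$ and $y$ gives $\sum_{j\in I}k_j\leq\alpha|I|+2C$ for all finite intervals $I$, so \eqref{eq:window-density} holds with constants $\alpha,2C$.

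Take $D=\{j:w_j\neq0\}$. This set is nonempty: at $j=0$, the vectors $\phi(x)$ and $\phi(y)$ are probability vectors whose supports lie in $\Lambda(x)$ and $\Lambda(y)$, which are disjoint by (iv), so $\phi(x)\neq\phi(y)$. Lemma~\ref{lem:finite-window-matching} with $\iota=0$ then gives $J=[A,B]$ with $D\cap J\neq\varnothing$, $K=[A+R,B-R]$, and an $R$-injective map from $\{(j,r):j\in J,1\le r\le k_j\}$ into $K\times\{1,\dots,n\}$. Identifying $r$ with the $r$-th element of $E_j$, this becomes an $R$-injective $\pi\colon E:=\{(j,i):j\in J,\ i\in E_j\}\to K\times\{1,\ldots,n\}$. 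Set $\cR=\pi(E)$, so that $\pi$ is $R$-bijective onto $\cR$ and $E\neq\varnothing$.

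Now look at the coordinates $(k,c)\in\cR$ of $\cM_Mw=0$. Since $k\in K$, the entry $(\cM_Mw)_{k,c}$ involves only $w_{j}$ with $|j-k|\leq R$, hence $j\in J$. Within $J$ the nonzero entries of $w$ are exactly those indexed by $E$. Therefore $0=(\cM_Mw)_{k,c}=\sum_{(j,i)\in E}(\cM_M\mathbf e_{j,i})_{k,c}\,w_{j,i}$. This says the square matrix of Definition~\ref{def:regular} annihilates the nonzero vector $(w_{j,i})_{(j,i)\in E}$, contradicting regularity of $M$. Hence $I_f$ is $\cU$-separating.

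The main point requiring care is this locality step, in particular checking that no contributions from outside $J$ enter rows indexed by $K$; this is exactly why $K$ is taken to be the $R$-interior of $J$. The remaining steps are bookkeeping of the constants, notably using $2C$ in place of $C$ when choosing $R$.
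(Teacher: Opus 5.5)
Your proposal is correct and follows the paper's proof essentially step for step: the same coding lemma, the bound $\alpha=2\rho+2$ with constant $2C$, a regular family near $\widehat M$, the $\iota=0$ matching on the $R$-interior $K$, and the contradiction with regularity. The only difference is cosmetic: you feed the matching lemma $D=\{j:w_j\neq0\}$ where the paper uses $D=\{0\}$, and either choice guarantees $E\neq\varnothing$ and a nonzero coefficient vector $(w_{j,i})_{(j,i)\in E}$.
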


\begin{proof}
Fix $\cU\in\Cov(X)$, $f_0\in C(X,[0,1]^n)$, and $\varepsilon>0$.
Apply Lemma~\ref{lem:coding-package}.  We obtain
$q,\phi,I_\phi,\Lambda,\kappa,C,B_0$, and $\lambda$ with the properties
listed there.  Set
\[
   \alpha:=2\rho+2<n,
   \qquad
   C_0:=2C,
\]
and choose $R\geq1$ so large that
\[
   2nR>C_0.
\]
By Proposition~\ref{prop:generic-regular}, choose a
regular family $M=(M_{-R},\ldots,M_R)$ satisfying
\eqref{eq:coefficient-closeness}.  Lemma~\ref{lem:finite-band-perturbation}
then gives a map $f\in C(X,[0,1]^n)$ with
$\|f-f_0\|_\infty<\varepsilon$ and
\[
   f(T^k x)=(\cM_M I_\phi(x))_k
   \qquad(x\in X,\ k\in\bbZ).
\]

Suppose, toward a contradiction, that $I_f(x)=I_f(y)$ for points $x,y$
which do not lie in a common member of $\cU$.  Write
\[
   z_j=\phi(T^j x),
   \qquad
   z'_j=\phi(T^j y),
   \qquad
   w_j=z_j-z'_j.
\]
By~\eqref{eq:lambda-separation}, $\Lambda(x)$ and $\Lambda(y)$ are
disjoint.  The sets $\{i:z_{0,i}>0\}$ and $\{i:z'_{0,i}>0\}$ are
nonempty and lie in $\Lambda(x)$ and $\Lambda(y)$, respectively, so
\begin{equation}\label{eq:warmup-nonzero-time}
   w_0\ne0.
\end{equation}
For every $j\in\bbZ$, let
\[
   E_j:=\{i:w_{j,i}\ne0\},
   \qquad
   k_j:=|E_j|.
\]
The sets $\{i:z_{j,i}>0\}$ and $\{i:z'_{j,i}>0\}$ are contained in
$\Lambda(T^j x)$ and $\Lambda(T^j y)$, respectively.  Therefore
\[
   k_j
   \leq|\Lambda(T^j x)|+|\Lambda(T^j y)|
   =\kappa(T^j x)+\kappa(T^j y)+2.
\]
By~\eqref{eq:coding-orbit-bound}, for every finite interval $I$,
\[
   \sum_{j\in I}k_j
   \leq(2\rho+2)|I|+2C
   =\alpha|I|+C_0.
\]

For each $j\in\bbZ$, enumerate $E_j$.  Apply
Lemma~\ref{lem:finite-window-matching} with
$D=\{0\}$ and $\iota=0$.  We obtain an interval $J=[A,B]$
containing zero, its $R$-interior $K=[A+R,B-R]$, and, via these
enumerations of $E_j$, an $R$-injective map from
\[
   E:=\{(j,i):j\in J,\ i\in E_j\}
\]
into $K\times\{1,\ldots,n\}$.  Let $\cR$ be its image.  Then
$|E|=|\cR|$ and the induced map from $E$ to $\cR$ is $R$-bijective.
The regular coefficient family $M$ makes the square matrix
\[
   A_J:=\bigl[(\cM_M\mathbf e_{j,i})_{k,c}\bigr]_{
      (k,c)\in\cR,\ (j,i)\in E}
\]
invertible.

Let $w^{(J)}$ be the sequence equal to $w$ on $J$ and zero outside
$J$.  Then
\begin{equation}\label{eq:warmup-coordinate-expansion}
   w^{(J)}=\sum_{(j,i)\in E}w_{j,i}\mathbf e_{j,i}.
\end{equation}
The equality $I_f(x)=I_f(y)$ and~\eqref{eq:orbit-convolution} give
$\cM_Mw=0$.  If
$k\in K$, then $k+r\in J$ for every $|r|\leq R$, and hence
\[
   (\cM_Mw^{(J)})_k=(\cM_Mw)_k=0.
\]
Applying $\cM_M$ to~\eqref{eq:warmup-coordinate-expansion} and then
restricting the resulting sequence to the coordinates in $\cR$ yields
\[
   A_J(w_{j,i})_{(j,i)\in E}=0.
\]
The vector $(w_{j,i})_{(j,i)\in E}$ is nonzero because $0\in J$ and
\eqref{eq:warmup-nonzero-time} holds.  This contradicts the invertibility
of $A_J$.  Thus $f\in\mathcal G(\cU)$, proving density.
\end{proof}

\begin{proof}[Proof of Theorem~\ref{thm:warmup}]
Put $d=\dim(X,T)<\infty$.  Choose $\rho>d$ and then an integer $n$ with
$2\rho+2<n$.  Proposition~\ref{prop:warmup-density} shows that
$\mathcal G(\cU)$ is dense in $C(X,[0,1]^n)$ for every
$\cU\in\Cov(X)$.  The Baire reduction,
Proposition~\ref{prop:baire-reduction}, completes the proof.
\end{proof}

\begin{proof}[Proof of Corollary~\ref{cor:finite-dim-shift}]
If $\dim(X,T)<\infty$, Theorem~\ref{thm:warmup} supplies an equivariant
embedding into a cubical shift with finite-dimensional alphabet.  Conversely, if
$(X,T)$ embeds into the $n$-cubical shift, monotonicity and the
full-shift formula give
\[
   \dim(X,T)\leq
   \dim\bigl(([0,1]^n)^{\bbZ},\sigma\bigr)=n<\infty
\]
\cite[Proposition~3.1 and Theorem~8.1]{Meyerovitch2026}.
\end{proof}

\section{Signed differences and superregularity}
\label{sec:sharp-upgrade}

The sharp argument uses Lemma~\ref{lem:finite-window-matching} with
$\iota=1$.  In this section, we study signed-difference vectors and
superregular coefficient families in preparation for the proof of the
sharp theorem.

\subsection{Decomposition of simplex differences}
\label{sec:simplex-differences}

\begin{definition}\label{def:signed-difference}
Let $q$ be a positive integer, and let $e_1,\ldots,e_q$ be the standard
basis of $\bbR^q$.  For $1\leq a<b\leq q$ and
$\varepsilon\in\{-1,1\}$, we call
\[
   \varepsilon(e_b-e_a)\in\bbR^q
\]
a \emph{signed-difference vector}.  Its lower index is $a$, its upper
index is $b$, and its sign is $\varepsilon$.  The lower and upper
indices are determined by $a<b$ and do not depend on the sign.

For $j\in\bbZ$, define $\delta(j;a,b)\in(\bbR^q)^{\bbZ}$ by
\[
   \bigl(\delta(j;a,b)\bigr)_\ell
   =
   \begin{cases}
      e_b-e_a,&\ell=j,\\
      0,&\ell\ne j.
   \end{cases}
\]
The sequence $\varepsilon\delta(j;a,b)$ is the \emph{signed difference}
supported at time $j$ associated with $\varepsilon(e_b-e_a)$ and has
the same lower index, upper index, and sign.
\end{definition}

\begin{lemma}
\label{lem:two-simplex-affine}
Let $q$ be a positive integer, let $p,p'\in\Deltaq$, and let
$A,B\subseteq\{1,\ldots,q\}$ be nonempty sets such that
\[
   \{i:p_i>0\}\subseteq A,
   \qquad
   \{i:p'_i>0\}\subseteq B.
\]
\begin{enumerate}[label=\textup{(\arabic*)},leftmargin=2.6em]
\item If $A\cap B\ne\varnothing$, set $S=A\cup B$, $c=\min S$, and
$d=p-p'$.  Then
\[
   p-p'
   =\sum_{\substack{i\in S\setminus\{c\}\\d_i\ne0}}
      |d_i|\,\operatorname{sgn}(d_i)(e_i-e_c).
\]
The sum has at most $(|A|-1)+(|B|-1)$ terms, and its indices
$i$ are pairwise distinct.

\item If $A\cap B=\varnothing$, set $a_0=\min A$ and $b_0=\min B$.  Then
\[
   p-p'=(e_{a_0}-e_{b_0})
      +\sum_{i\in A\setminus\{a_0\}}p_i(e_i-e_{a_0})
      +\sum_{i\in B\setminus\{b_0\}}p'_i(e_{b_0}-e_i).
\]
The two sums have at most
$(|A|-1)+(|B|-1)$ terms with pairwise distinct upper indices.  The
upper index of $e_{a_0}-e_{b_0}$ is $\max\{a_0,b_0\}$ and differs
from all of them.
\end{enumerate}
\end{lemma}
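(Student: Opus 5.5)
Both parts are direct verifications in coordinates, using only that $p,p'\in\Deltaq$ have coordinate sums equal to $1$ and are supported in $A$ and $B$ respectively.

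\textbf{Part (1).} Put $d=p-p'$. Then $d$ is supported in $S=A\cup B$ and $\sum_i d_i=0$, so $d_c=-\sum_{i\in S\setminus\{c\}}d_i$. Hence
\[
   d=\sum_{i\in S\setminus\{c\}}d_i(e_i-e_c),
\]
and writing $d_i=|d_i|\operatorname{sgn}(d_i)$ and dropping the vanishing terms gives the displayed identity. The indices are distinct elements of $S\setminus\{c\}$, so the number of terms is at most
\[
   |S|-1=|A|+|B|-|A\cap B|-1\leq|A|+|B|-2,
\]
because $|A\cap B|\geq1$. Since $c=\min S$, each term $\operatorname{sgn}(d_i)(e_i-e_c)$ is a signed-difference vector with upper index $i$. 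This is the sense in which the indices are pairwise distinct, and it is the form presumably used later.

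\textbf{Part (2).} Because $p$ is supported in $A$ with $\sum_{i\in A}p_i=1$,
\[
   p=e_{a_0}+\sum_{i\in A\setminus\{a_0\}}p_i(e_i-e_{a_0}).
\]
Symmetrically,
\[
   p'=e_{b_0}+\sum_{i\in B\setminus\{b_0\}}p'_i(e_i-e_{b_0}).
\]
Subtracting the second identity from the first gives the stated formula. The two sums have at most $(|A|-1)+(|B|-1)$ terms.

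Now determine the upper indices. Since $a_0=\min A$, a term with $i\in A\setminus\{a_0\}$ has upper index $i$. Since $b_0=\min B$, a term $e_{b_0}-e_i$ with $i\in B\setminus\{b_0\}$ equals $-(e_i-e_{b_0})$, again with upper index $i$. Because $A\cap B=\varnothing$, these upper indices are pairwise distinct.

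Finally, $a_0\neq b_0$, and $e_{a_0}-e_{b_0}$ has upper index $m:=\max\{a_0,b_0\}$. We check that $m$ differs from every upper index above.
\begin{itemize}[leftmargin=2em]
\item If $m=a_0$, then $m\in A$ but $m\notin A\setminus\{a_0\}$. Also $m\notin B$, since $A\cap B=\varnothing$.
\item If $m=b_0$, the same argument applies with the roles of $A$ and $B$ exchanged.
\end{itemize}

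No step is a real obstacle. The only point requiring care is the index bookkeeping in (2), namely the sign convention for the $B$-terms and the check that the leading difference has a fresh upper index.
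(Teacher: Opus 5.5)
Your proof is correct and follows the paper's argument essentially step for step. In (1) you use that $d$ is supported in $S$ with zero coordinate sum, and in (2) you expand $p$ and $p'$ about $e_{a_0}$ and $e_{b_0}$, subtract, and read off the upper indices using $A\cap B=\varnothing$; your explicit case check that $\max\{a_0,b_0\}$ is a fresh upper index just spells out the paper's one-line remark.
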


\begin{proof}
First suppose $A\cap B\ne\varnothing$.  Put $S=A\cup B$,
$c=\min S$, and $d=p-p'$.  The vector $d$ is supported in $S$ and its
coordinates sum to zero, so
\[
   d=\sum_{\substack{i\in S\setminus\{c\}\\d_i\ne0}}
      |d_i|\,\operatorname{sgn}(d_i)(e_i-e_c).
\]
The coefficients are positive and the upper indices $i$ are
distinct.  Moreover,
\[
   |S|-1=|A|+|B|-|A\cap B|-1
      \leq(|A|-1)+(|B|-1).
\]

Now suppose $A\cap B=\varnothing$, and put $a_0=\min A$ and
$b_0=\min B$.
Since the coordinates of $p$ sum to one and
$\{i:p_i>0\}\subseteq A$,
\[
   p=e_{a_0}+\sum_{i\in A\setminus\{a_0\}}p_i(e_i-e_{a_0}).
\]
Similarly,
\[
   p'=e_{b_0}+\sum_{i\in B\setminus\{b_0\}}p'_i(e_i-e_{b_0}).
\]
Subtracting gives
\[
   p-p'=(e_{a_0}-e_{b_0})
      +\sum_{i\in A\setminus\{a_0\}}p_i(e_i-e_{a_0})
      +\sum_{i\in B\setminus\{b_0\}}p'_i(e_{b_0}-e_i).
\]
Their upper indices lie in the disjoint union
$(A\setminus\{a_0\})\sqcup(B\setminus\{b_0\})$.  The upper index of the
offset $e_{a_0}-e_{b_0}$ is $\max\{a_0,b_0\}$, which belongs to neither
part of this union.
\end{proof}

\subsection{Signed-difference columns and affine configurations}
\label{sec:affine-configurations}

Fix positive integers $q,n$ and a nonnegative integer $R$, and let
$M=(M_{-R},\ldots,M_R)$ with $M_r\in M_{n\times q}(\bbR)$.  We use the
linear map $\cM_M$ from~\eqref{eq:convolution}.

\begin{definition}\label{def:upper-compatible}
Let $q$ be a positive integer.  A finite set
$\cE\subset(\bbR^q)^{\bbZ}$ of signed differences is
\emph{upper-compatible} if distinct vectors
$\varepsilon\delta(j;a,b)$ and
$\varepsilon'\delta(j';a',b')$ satisfy
\[
   j=j'\quad\Longrightarrow\quad b\ne b'.
\]
Thus upper indices are pairwise distinct among vectors supported at
the same time; lower indices may repeat.
\end{definition}

We use Definition~\ref{def:r-injective} for signed differences by
identifying
\[
   \varepsilon\delta(j;a,b)
   \longleftrightarrow
   (j,(a,b,\varepsilon)).
\]
For a finite set of signed differences and a finite set
$\cR\subset\bbZ\times\{1,\ldots,n\}$, the associated time-incidence
graph joins a vector supported at time $j$ to $(k,c)\in\cR$ exactly when
$|j-k|\leq R$.
By the definition of $\cM_M$, this graph records precisely the structural
zero pattern of the corresponding convolution columns.  We order
$\cR$ lexicographically by $(k,c)$ and signed differences
lexicographically by $(j,a,b,\varepsilon)$, with $-1<1$.

\begin{definition}
\label{def:admissible-affine-configuration}
Let $q,n$ be positive integers and $R$ a nonnegative integer.  An
\emph{admissible affine configuration} is a tuple
\[
   \mathfrak a=(\cE,D,(v_j)_{j\in D},\cR)
\]
with the following properties.
\begin{enumerate}[label=\textup{(\roman*)},leftmargin=2.5em]
\item $\cE$ is a finite upper-compatible set of signed differences.
\item $D\subset\bbZ$ is finite and nonempty, and for every $j\in D$,
\[
   v_j=\varepsilon_j\delta(j;a_j,b_j),
   \qquad
   \varepsilon_j\in\{-1,1\},
   \quad1\leq a_j<b_j\leq q,
\]
is a signed difference whose upper index $b_j$ differs from the upper index of
every vector in $\cE$ supported at time $j$.  In particular,
$v_j\notin\cE$.
\item $\cR\subset\bbZ\times\{1,\ldots,n\}$ is finite with
$|\cR|=|\cE|+1$, and, for $j_*:=\min D$, there is an $R$-bijective map
from $\cE\cup\{v_{j_*}\}$ to $\cR$.
\end{enumerate}
\end{definition}

For such a configuration, set
\[
   V_D:=\sum_{j\in D}v_j.
\]
The notation
\[
   \left[(\cM_Mu)_{u\in\cE},\ \cM_MV_D\right]_{\cR}
\]
denotes the square matrix whose columns indexed by $u\in\cE$ are the
restrictions of $\cM_Mu$ to $\cR$, and whose final column is the
restriction of $\cM_MV_D$.

\subsection{Superregular coefficient families}
\label{sec:superregularity}

The required coefficient families are generic because every determinant
arising from an admissible affine configuration is a nonzero polynomial.

\begin{lemma}
\label{lem:affine-minor}
Let $q,n$ be positive integers and $R$ a nonnegative integer, and let
$\mathfrak a=(\cE,D,(v_j)_{j\in D},\cR)$ be an admissible affine
configuration.  For
$M\in\bigl(M_{n\times q}(\bbR)\bigr)^{2R+1}$, set
\[
   P_{\mathfrak a}(M)
   :=\det\left[
      (\cM_M u)_{u\in\cE},\ \cM_M V_D
     \right]_{\cR}.
\]
Then $P_{\mathfrak a}$ is a nonzero polynomial on
$\bigl(M_{n\times q}(\bbR)\bigr)^{2R+1}$.
\end{lemma}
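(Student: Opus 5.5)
Since $P_{\mathfrak a}$ is visibly a polynomial in the entries of $M_{-R},\ldots,M_R$, it suffices to exhibit one specialization at which it is nonzero. I would use the same monomial specialization as in the proof of Proposition~\ref{prop:generic-regular}: for $L>q+n$, set $(\widetilde M_r)_{c,i}=X^{(-Lr+q+c-i)^2}$. I then work in the ring of Laurent-type series in $X$ (all exponents are nonnegative integers), and show that the specialized determinant has a unique term of lowest degree.

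The first step is to expand each column by linearity. A signed difference $\varepsilon\delta(j;a,b)$ has column $\varepsilon(\cM_M\mathbf e_{j,b}-\cM_M\mathbf e_{j,a})$. The last column is $\sum_{j\in D}\varepsilon_j(\cM_M\mathbf e_{j,b_j}-\cM_M\mathbf e_{j,a_j})$. Multilinearity writes $P_{\mathfrak a}$ as a signed sum of determinants of the form $\det[\cM_M\mathbf e_{\gamma(u)}]_{\cR}$. Here each $u\in\cE$ selects $\gamma(u)\in\{(j,a),(j,b)\}$, and the last column selects $(j,a_j)$ or $(j,b_j)$ for some $j\in D$. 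Each such determinant is a coefficient matrix of the kind treated in Proposition~\ref{prop:generic-regular}. Its specialized value is a signed sum over $R$-bijections $\pi$ of $X^{\sum(\eta_{\pi(\cdot)}-\xi_{\cdot})^2}$, with $\xi_{j,i}=Lj+i$ and $\eta_{k,c}=Lk+q+c$. A choice with repeated basis vectors gives zero.

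The key step, and the main obstacle, is to identify a selection together with a bijection that \emph{strictly} minimizes the exponent among all (selection, $R$-bijection) pairs, so that no cancellation can occur. The distinguished selection I would try is: every $u\in\cE$ uses its upper index $b$, and the last column uses the upper index $b_{j_*}$ at time $j_*=\min D$. Upper-compatibility and condition (ii) make these $|\cE|+1$ basis vectors $\mathbf e_{j,b}$ pairwise distinct. Condition (iii) then supplies an $R$-bijection, so by Lemma~\ref{lem:banded-uncrossing} the order-preserving matching is $R$-bijective and is the unique cost minimizer for this selection.

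The competing pairs must then be shown to have strictly larger cost. The idea is to compare sorted lists. Replacing an upper index $b$ by a lower index $a<b$ at the same time, or replacing time $j_*$ by a later $j\in D$ in the last column, moves the $\xi$-multiset: the first change decreases some $\xi$ by less than $L$, and the second increases it by at least $L-q$. I would attempt to prove a monotonicity statement in the spirit of the uncrossing lemma. For sorted targets $\eta$ fixed, consider the minimal assignment cost $\sum(\eta_p-\xi_{(p)})^2$ over sorted sources. I expect this cost to be strictly minimized by the designated choice, once the scaling is chosen appropriately. This may require taking the exponent weights larger, for instance replacing $i$ by $L' i$ or using a weighting that favours large $i$ and small $j$, and I would modify the specialization if the plain one fails. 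One possibility is to switch to a lexicographic leading-term argument instead of the quadratic cost: order monomials so that using an upper index is always preferred, then compare via a term order induced by a generic weight.

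Once uniqueness is established, the lowest-degree coefficient is $\pm\prod\varepsilon=\pm1\neq0$. Hence $P_{\mathfrak a}\not\equiv0$.
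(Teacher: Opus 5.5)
The step you flag as the main obstacle is not just unproven. With the single-variable specialization it is false, so the argument does not go through.

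The quadratic exponent $(\eta_{k,c}-\xi_{j,i})^2$ rewards moving $\xi$ toward $\eta$. This does not uniformly favour upper indices or the earliest time of $D$. Two examples, both with $R\geq1$ and $\cE=\varnothing$:
\begin{itemize}
\item Take $D=\{j_*\}$ and $\cR=\{(j_*-1,c)\}$. The two exponents are $(L-q-c+i)^2$ with $i\in\{a,b\}$, so the lowest monomial uses the \emph{lower} index $a$.
\item Take $D=\{k-1,k\}$ and $\cR=\{(k,c)\}$. The lowest monomial comes from $j=k$, not from $j_*=k-1$.
\end{itemize}
Worse, for $L>q+n$ as you chose it, the specialized determinant can vanish outright. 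Take $q=10$, $n=1$, $R=1$, $L=19$, $\cE=\varnothing$, $D=\{0,1\}$, $v_0=\delta(0;1,2)$, $v_1=\delta(1;1,2)$, and $\cR=\{(0,1)\}$. This configuration is admissible. Its single entry $(M_0)_{1,2}-(M_0)_{1,1}+(M_1)_{1,2}-(M_1)_{1,1}$ specializes to $X^{81}-X^{100}+X^{100}-X^{81}=0$. Enlarging $L$ removes this particular collision, but by the examples above the designated term is still not the lowest one. So no monotonicity statement about sorted assignment costs can rescue the plan in this form.

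What is missing is a grading in which each discrete choice is settled at its own level, by an exponent that does not depend on the choices still open. The paper specializes $(\widetilde M_r)_{c,i}=X_1^{r+R}X_2^{q-i}X_3^{(-Lr+q+c-i)^2}$ and compares monomials lexicographically.
\begin{itemize}
\item The $X_1$-weight is linear in the time offset, so it telescopes over any bijection. The $X_1$-exponent of a term is $NR+\sum_{s<N}\tau_s+j-\sum_\ell t_\ell$. This is independent of $\sigma$ and $\boldsymbol i$ and strictly increasing in the chosen $j\in D$. Since a term with $j=j_*$ satisfies the band condition by (iii), minimizing forces $j=j_*$.
\item The $X_2$-exponent $\sum_s(q-i_s)$ is independent of $\sigma$, and the band condition does not involve $\boldsymbol i$. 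So it forces $i_s=b_s$ for every $s$.
\item Only then is the quadratic $X_3$-exponent used. The selected labels $(\tau_s,b_s)$ are pairwise distinct by upper-compatibility and (ii). Lemma~\ref{lem:banded-uncrossing} then gives a unique minimizing $\sigma$, hence a surviving coefficient $\pm1$.
\end{itemize}
Your last sentence gestures at a lexicographic argument. But the telescoping time weight and the $\sigma$-independent index weight are precisely what make the designated term leading, and both are absent from your proposal.
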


\begin{proof}
Write
\[
   N:=|\cR|=|\cE|+1,\qquad
   \cE=\{u_1,\ldots,u_{N-1}\},\qquad
   \cR=\{\rho_1,\ldots,\rho_N\}
\]
in the fixed orders, and write
\[
   \rho_\ell=(t_\ell,\gamma_\ell),\qquad
   u_s=\varepsilon_s\delta(\tau_s;a_s,b_s)
   \quad(s<N).
\]
Choose an integer $L>q+n$.  For $|r|\leq R$,
$1\leq c\leq n$, and $1\leq i\leq q$, put
\[
   S_{r,c,i}:=(-Lr+q+c-i)^2
\]
and set
\begin{equation}
   (\widetilde M_r)_{c,i}
   :=X_1^{\,r+R}X_2^{\,q-i}X_3^{\,S_{r,c,i}}
   \in\bbZ[X_1,X_2,X_3].\label{eq:affine-specialization}
\end{equation}
Observe that all exponents in~\eqref{eq:affine-specialization} are
nonnegative.  This
assignment of the matrix-entry variables extends to a ring homomorphism into
$\bbZ[X_1,X_2,X_3]$.  Let
\[
   Q_{\mathfrak a}(X_1,X_2,X_3):=P_{\mathfrak a}(\widetilde M)
\]
be the image of $P_{\mathfrak a}$ under~\eqref{eq:affine-specialization}.  It is
enough to prove that $Q_{\mathfrak a}$ is nonzero.

For $j\in D$, write
\[
   v_j=\varepsilon_N\delta(\tau_N;a_N,b_N),
   \qquad \tau_N=j,
\]
and set $u_N:=v_j$.
Each term in the expansion of $Q_{\mathfrak a}$ is indexed by
\[
   \omega=(j,\sigma,\boldsymbol i),\qquad
   \sigma\in\mathfrak S_N,\qquad
   \boldsymbol i=(i_1,\ldots,i_N)
      \in\prod_{s=1}^{N}\{a_s,b_s\}.
\]
Here $j$ selects the summand $v_j$ in the last column, $\sigma$ assigns
column $s$ to the output row $\rho_{\sigma(s)}$ in the determinant
expansion, and $i_s=a_s$ or $i_s=b_s$ selects the lower- or upper-index
entry, respectively, of the signed difference $u_s$.  Observe that the
term indexed by $\omega$ vanishes unless
\begin{equation}
   |\tau_s-t_{\sigma(s)}|\leq R
   \qquad\text{for all }1\leq s\leq N.
   \label{eq:term-band-condition}
\end{equation}
When~\eqref{eq:term-band-condition} holds, the definition of
$\cM_M$ and~\eqref{eq:affine-specialization} show that the term indexed by
$\omega$ contributes
\[
   a_\omega X_1^{A_1(\omega)}X_2^{A_2(\omega)}X_3^{A_3(\omega)},
   \qquad
   a_\omega:=\operatorname{sgn}(\sigma)
       (-1)^{\#\{s:i_s=a_s\}}\prod_{s=1}^{N}\varepsilon_s
       \in\{-1,1\},
\]
where
\begin{equation}
\begin{aligned}
   A_1(\omega)
      &:=\sum_{s=1}^{N}\bigl(\tau_s-t_{\sigma(s)}+R\bigr),\\
   A_2(\omega)
      &:=\sum_{s=1}^{N}(q-i_s),\\
   A_3(\omega)
      &:=\sum_{s=1}^{N}
         S_{\tau_s-t_{\sigma(s)},\gamma_{\sigma(s)},i_s}.
\end{aligned}\label{eq:affine-exponents}
\end{equation}
Set $j_*:=\min D$.  Condition~\textup{(iii)} in
Definition~\ref{def:admissible-affine-configuration} supplies an
$R$-bijective map from $\cE\cup\{v_{j_*}\}$ to $\cR$, which determines
a permutation $\sigma\in\mathfrak S_N$.  Hence there exists
\[
   \omega=(j_*,\sigma,\boldsymbol i),
   \qquad
   \boldsymbol i\in\prod_{s=1}^{N}\{a_s,b_s\},
\]
satisfying~\eqref{eq:term-band-condition}.

We order the monomials indexed by $\omega$ satisfying
\eqref{eq:term-band-condition} as follows.  First compare their
$X_1$-exponents $A_1(\omega)$.  If these are equal, compare their
$X_2$-exponents $A_2(\omega)$.  If both are equal, compare their
$X_3$-exponents $A_3(\omega)$.  In each comparison, the smaller exponent
takes precedence.

\textbf{Step 1: Minimizing $A_1(\omega)$ forces $j=j_*$.}
Because $\sigma$ permutes all rows of the fixed set $\cR$,
\[
\begin{aligned}
   A_1(\omega)
      &=NR+\sum_{s=1}^{N}\tau_s
           -\sum_{s=1}^{N}t_{\sigma(s)}\\
      &=NR+\sum_{s=1}^{N-1}\tau_s+j
           -\sum_{\ell=1}^{N}t_\ell.
\end{aligned}
\]
Thus $A_1(\omega)$ depends only on $j$ and is strictly increasing in
$j$.  Since an $\omega$ satisfying~\eqref{eq:term-band-condition} with
$j=j_*$ exists, $A_1(\omega)$ is minimized exactly when $j=j_*$. 

\textbf{Step 2: Minimizing $A_2(\omega)$ forces $i_s=b_s$ for all
$1\leq s\leq N$.}
After Step~1, restrict to $j=j_*$.  Since
\eqref{eq:term-band-condition} does not involve $\boldsymbol i$,
replacing $i_s=a_s$ by $i_s=b_s$ preserves
\eqref{eq:term-band-condition}.  Moreover,
\[
   (q-a_s)-(q-b_s)=b_s-a_s>0,
\]
so this replacement strictly decreases $A_2(\omega)$.  Therefore,
among the indices minimizing $A_1$, the value $A_2(\omega)$ is minimized
exactly when
\[
   i_s=b_s\qquad(1\leq s\leq N).
\]
Thus $\boldsymbol i$ is uniquely determined, and only $\sigma$ remains
to be chosen.

\textbf{Step 3: Minimizing $A_3(\omega)$ determines the row assignment
uniquely.}
After Steps 1 and~2, $j=j_*$ and $i_s=b_s$ for every $s$.  The input
labels
\[
   (\tau_s,b_s)\qquad(1\leq s\leq N)
\]
are pairwise distinct: this follows from upper compatibility for the
members of $\cE$ and from condition~\textup{(ii)} of
Definition~\ref{def:admissible-affine-configuration} for $v_{j_*}$.
The output labels $(t_\ell,\gamma_\ell)$ are pairwise distinct because
the elements $\rho_\ell$ of $\cR$ are distinct.

For an input label $(j,i)$ and an output label $(k,c)$, set
\[
   \xi_{j,i}:=Lj+i,\qquad
   \eta_{k,c}:=Lk+q+c.
\]
Since $L>q+n$,
\[
\begin{aligned}
   \xi_{j,i}<\xi_{j',i'}
   &\iff j<j'\ \text{or}\ (j=j'\ \text{and}\ i<i'),\\
   \eta_{k,c}<\eta_{k',c'}
   &\iff k<k'\ \text{or}\ (k=k'\ \text{and}\ c<c').
\end{aligned}
\]
Moreover,
\begin{equation}
   S_{j-k,c,i}
      =\bigl(-L(j-k)+q+c-i\bigr)^2
      =\bigl(\eta_{k,c}-\xi_{j,i}\bigr)^2.\label{eq:affine-distance}
\end{equation}
Arrange the numbers $\xi_{\tau_s,b_s}$ and
$\eta_{t_\ell,\gamma_\ell}$ in increasing order and denote them by
\[
   \xi_1<\cdots<\xi_N,
   \qquad
   \eta_1<\cdots<\eta_N,
\]
respectively.  Denote their corresponding time coordinates by $j_p$ and $k_p$,
respectively.  Thus both $(j_p)_{p=1}^N$ and $(k_p)_{p=1}^N$ are
nondecreasing.  By~\eqref{eq:affine-exponents} and
\eqref{eq:affine-distance}, minimizing $A_3(\omega)$ over the
permutations $\sigma$ satisfying~\eqref{eq:term-band-condition} is
equivalent to minimizing
\begin{equation}
   \sum_{p=1}^{N}\bigl(\eta_{\pi(p)}-\xi_p\bigr)^2\label{eq:affine-cost}
\end{equation}
over all $R$-bijective maps
\[
   (j_p,\xi_p)\longmapsto(k_{\pi(p)},\eta_{\pi(p)}).
\]

Condition~\textup{(iii)} supplies at least one such map.  Hence
Lemma~\ref{lem:banded-uncrossing} shows that the order-preserving map
\[
   (j_p,\xi_p)\longmapsto(k_p,\eta_p)
   \qquad(1\leq p\leq N)
\]
is $R$-bijective and is the unique $R$-bijective minimizer of
\eqref{eq:affine-cost}.

Because the input labels $(\tau_s,b_s)$, $1\leq s\leq N$, are pairwise
distinct, each one identifies exactly one column $u_s$.
Therefore, the order-preserving map uniquely determines $\sigma$.  Let
$\omega_0$ be the resulting index.  By Steps 1--3, the term indexed by
$\omega_0$ is the unique term contributing the smallest monomial in the
order defined above.  Consequently,
the coefficient of
\[
   X_1^{A_1(\omega_0)}X_2^{A_2(\omega_0)}X_3^{A_3(\omega_0)}
\]
in $Q_{\mathfrak a}$ is exactly $a_{\omega_0}\in\{-1,1\}$.  Hence this
least monomial cannot cancel, and
\[
   Q_{\mathfrak a}(X_1,X_2,X_3)\ne0.
\]
If $P_{\mathfrak a}$ were the zero polynomial in the entries of
$M_{-R},\ldots,M_R$, every polynomial specialization would vanish.
Hence $P_{\mathfrak a}$ is nonzero.
\end{proof}

\begin{definition}
\label{def:superregular}
Let $q,n$ be positive integers and $R$ a nonnegative integer.  A
coefficient family
\[
   M=(M_{-R},\ldots,M_R)
   \in\bigl(M_{n\times q}(\bbR)\bigr)^{2R+1}
\]
is \emph{superregular} if, for every admissible affine
configuration $\mathfrak a=(\cE,D,(v_j)_{j\in D},\cR)$, the square matrix
\[
   \left[(\cM_M u)_{u\in\cE},\ \cM_M V_D\right]_{\cR}
\]
is invertible.
\end{definition}

\begin{corollary}
\label{cor:generic-superregular}
For positive integers $q,n$ and a nonnegative integer $R$,
superregular coefficient families form a dense $G_\delta$ subset of
\[
   \bigl(M_{n\times q}(\bbR)\bigr)^{2R+1}.
\]
\end{corollary}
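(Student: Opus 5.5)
The plan mirrors the proof of Proposition~\ref{prop:generic-regular}, with Lemma~\ref{lem:affine-minor} supplying the nonvanishing input. Work in the finite-dimensional real vector space $\bigl(M_{n\times q}(\bbR)\bigr)^{2R+1}\cong\bbR^{(2R+1)nq}$. The set of superregular families is
\[
   \bigcap_{\mathfrak a}\{M:P_{\mathfrak a}(M)\neq0\},
\]
where $\mathfrak a$ ranges over all admissible affine configurations. It therefore suffices to prove three things: each set in the intersection is open and dense, there are only countably many configurations, and the Baire category theorem then applies.

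\textbf{Countability.} An admissible configuration $\mathfrak a=(\cE,D,(v_j)_{j\in D},\cR)$ is determined by finitely many discrete data:
\begin{itemize}[leftmargin=2em]
\item a finite set $\cE$ of signed differences, each specified by $(j,a,b,\varepsilon)\in\bbZ\times\{1,\ldots,q\}^2\times\{\pm1\}$;
\item a finite nonempty set $D\subset\bbZ$;
\item for each $j\in D$, a triple $(a_j,b_j,\varepsilon_j)$ from a finite set;
\item a finite set $\cR\subset\bbZ\times\{1,\ldots,n\}$.
\end{itemize}
Each datum ranges over finite subsets of a countable set, so the collection of all configurations is countable.

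\textbf{Open and dense.} For each fixed $\mathfrak a$, the determinant $P_{\mathfrak a}(M)$ is a polynomial in the entries of $M_{-R},\ldots,M_R$, because every column is a linear function of $M$. In particular, the last column $\cM_MV_D=\sum_{j\in D}\cM_Mv_j$ is a finite sum. Hence $\{P_{\mathfrak a}\neq0\}$ is open by continuity. By Lemma~\ref{lem:affine-minor}, $P_{\mathfrak a}$ is not the zero polynomial. A nonzero real polynomial cannot vanish on a nonempty open subset of $\bbR^m$, since it would then vanish identically, for instance by induction on $m$ or by comparing Taylor coefficients. So its zero set has empty interior and the complement is dense.

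\textbf{Conclusion.} The space $\bigl(M_{n\times q}(\bbR)\bigr)^{2R+1}$ is a complete metric space. By Baire's theorem, the countable intersection of open dense sets is a dense $G_\delta$, and this intersection is exactly the set of superregular families by Definition~\ref{def:superregular}.

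There is no real obstacle here, since the substantive work was done in Lemma~\ref{lem:affine-minor}. The only point needing care is checking that the index set of configurations is countable, in particular that the family $(v_j)_{j\in D}$ contributes only finitely many choices for each finite $D$.
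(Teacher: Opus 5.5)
Your proposal is correct and follows essentially the same route as the paper. Both arguments observe that there are only countably many admissible affine configurations, use Lemma~\ref{lem:affine-minor} to see that each associated determinant is a nonzero polynomial whose nonvanishing set is open and dense, and apply the Baire category theorem to the countable intersection, which is exactly the set of superregular families by Definition~\ref{def:superregular}.
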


\begin{proof}
Let
\[
   \mathcal L_q
   :=\{(a,b,\varepsilon):1\leq a<b\leq q,
      \ \varepsilon\in\{-1,1\}\},
\]
and let $\operatorname{Fin}(S)$ denote the set of finite subsets of $S$.
The set of admissible affine configurations is contained in
\[
\operatorname{Fin}(\bbZ\times\mathcal L_q)
\times
\bigcup_{D\in\operatorname{Fin}(\bbZ)}
   \bigl(\{D\}\times\mathcal L_q^D\bigr)
\times
\operatorname{Fin}\bigl(\bbZ\times\{1,\ldots,n\}\bigr).
\]
The set on the right is countable.  By
Lemma~\ref{lem:affine-minor}, the determinant associated with each
admissible affine configuration is a nonzero polynomial.  The complement of its
zero set is open; it is dense because the zero set of a nonzero real
polynomial has empty interior.  The countable intersection of these open
dense sets is a dense $G_\delta$ by the Baire category theorem, and its
members are exactly the superregular families.
\end{proof}

\section{Proof of the sharp embedding theorem}
\label{sec:sharp-proof}

It remains to verify the density hypothesis in
Proposition~\ref{prop:baire-reduction} under the sharp dimension bound.
The proof combines the coding lemma with the superregular
coefficient families supplied by
Corollary~\ref{cor:generic-superregular}.

\begin{proposition}\label{prop:sharp-density}
Let $n$ be a positive integer.  Assume
\[
   d:=\dim(X,T)<\frac n2.
\]
Then $\mathcal G(\cU)$ is dense in $C(X,[0,1]^n)$ for every
$\cU\in\Cov(X)$.
\end{proposition}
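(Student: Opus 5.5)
We follow the scheme of Proposition~\ref{prop:warmup-density}, replacing coordinate columns by signed-difference columns so that the count drops from $2\rho+2$ to $2\rho$.

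\textbf{Setup.} Fix $\cU$, $f_0$ and $\varepsilon>0$, and choose $\rho$ with $d<\rho<n/2$. Lemma~\ref{lem:coding-package} supplies $q,\phi,\Lambda,\kappa,C,B_0,\lambda$. Put
\[
\alpha:=2\rho<n,\qquad C_0:=2C,
\]
and choose $R\geq1$ with $2nR>C_0+1$. By Corollary~\ref{cor:generic-superregular}, pick a superregular $M$ satisfying \eqref{eq:coefficient-closeness}. Lemma~\ref{lem:finite-band-perturbation} then gives $f$ with $\|f-f_0\|_\infty<\varepsilon$ and $I_f(x)=(\cM_MI_\phi(x))$.

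\textbf{Contradiction setup and decomposition.} Suppose $I_f(x)=I_f(y)$ with $x,y$ not in a common member of $\cU$, and put $w_j=\phi(T^jx)-\phi(T^jy)$, so that $\cM_Mw=0$. At each time $j$, apply Lemma~\ref{lem:two-simplex-affine} with $A=\Lambda(T^jx)$ and $B=\Lambda(T^jy)$.
\begin{itemize}[leftmargin=2em]
\item If $A\cap B\neq\varnothing$, write $w_j$ as a positive combination of at most $\kappa(T^jx)+\kappa(T^jy)$ signed differences with distinct upper indices.
\item If $A\cap B=\varnothing$, write $w_j$ as the offset $e_{a_0}-e_{b_0}$ with coefficient $1$, plus at most $\kappa(T^jx)+\kappa(T^jy)$ further terms with positive coefficients and distinct upper indices, all different from the offset's upper index.
\end{itemize}
Let $D$ be the set of times of the second kind. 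Then $0\in D$ by \eqref{eq:lambda-separation}, so $D\neq\varnothing$. For $j\in D$, let $v_j$ be the offset signed difference. Let $\cE_j$ be the remaining signed differences at time $j$, let $k_j:=|\cE_j|\leq\kappa(T^jx)+\kappa(T^jy)$, and let $c_u>0$ be the coefficients. By \eqref{eq:coding-orbit-bound}, $\sum_{j\in I}k_j\leq 2\rho|I|+2C$ for every finite interval $I$.

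\textbf{Window and linear system.} Apply Lemma~\ref{lem:finite-window-matching} with this $D$ and $\iota=1$. This gives $J=[A,B]$, $K$, $d_*=\min(D\cap J)$, and an $R$-injective map from $\cE:=\bigcup_{j\in J}\cE_j$ together with the star at $d_*$, which we send to $v_{d_*}$. Let $\cR$ be its image. Then $\mathfrak a=(\cE,D\cap J,(v_j)_{j\in D\cap J},\cR)$ is an admissible affine configuration: upper-compatibility and condition (ii) come from the distinct-upper-index clauses of Lemma~\ref{lem:two-simplex-affine}, and condition (iii) from the matching. Restricting $w$ to $J$ gives
\[
w^{(J)}=\sum_{u\in\cE}c_u u+V_{D\cap J}.
\]
For $k\in K$ we have $(\cM_Mw^{(J)})_k=(\cM_Mw)_k=0$. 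Restricting to $\cR\subset K\times\{1,\dots,n\}$ therefore gives
\[
\bigl[(\cM_Mu)_{u\in\cE},\ \cM_MV_{D\cap J}\bigr]_{\cR}\,(c,1)^{\mathsf T}=0.
\]
The vector $(c,1)$ is nonzero because its last entry is $1$. This contradicts superregularity, so $f\in\mathcal G(\cU)$.

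\textbf{Main obstacle.} The delicate point is that all offsets in $D\cap J$ must enter with coefficient exactly $1$, so that they collapse into a single column $V_{D\cap J}$. This is why the decomposition in case (2) of Lemma~\ref{lem:two-simplex-affine} has offset coefficient $1$, and why admissibility only requires an $R$-bijection using $v_{j_*}$. We must check that $j_*=\min(D\cap J)$ matches the lemma's $d_*$, which it does by construction.

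Positivity of the $c_u$ is not needed. What is needed is that no decomposition term at time $j$ coincides with another, and that the upper indices at each time are distinct. Both hold by Lemma~\ref{lem:two-simplex-affine}.
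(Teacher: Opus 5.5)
Your proposal is correct and follows the paper's proof essentially step for step. It uses the same parameters ($\alpha=2\rho$, $C_0=2C$, $2nR>C_0+1$) and the same per-time decomposition via Lemma~\ref{lem:two-simplex-affine}, with the offsets $v_j$, coefficient $1$, at the times in $D$. It also uses the same window from Lemma~\ref{lem:finite-window-matching} with $\iota=1$, where the star is sent to $v_{d_*}$, and reaches the same contradiction with superregularity through a coefficient vector whose last entry is $1$.
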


\begin{proof}
Fix $\cU\in\Cov(X)$, $f_0\in C(X,[0,1]^n)$, and $\varepsilon>0$.
Choose
\[
   d<\rho<\frac n2
\]
and apply Lemma~\ref{lem:coding-package}.  This gives
$q,\phi,I_\phi,\Lambda,\kappa,C,B_0$, and $\lambda$.  Put
\[
   \alpha:=2\rho<n,
   \qquad
   C_0:=2C.
\]
Choose $R\geq1$ so large that
\begin{equation}
   2nR>C_0+1.\label{eq:5.24}
\end{equation}
By Corollary~\ref{cor:generic-superregular}, choose a superregular
coefficient family $M=(M_{-R},\ldots,M_R)$ satisfying
\eqref{eq:coefficient-closeness}.  Lemma~\ref{lem:finite-band-perturbation}
produces $f\in C(X,[0,1]^n)$ with
$\|f-f_0\|_\infty<\varepsilon$ and
\[
   f(T^k x)=(\cM_M I_\phi(x))_k
   \qquad(x\in X,\ k\in\bbZ).
\]

It remains to prove that $f\in\mathcal G(\cU)$.  We argue by
contradiction.
Suppose, toward a contradiction, that
\begin{equation}
   I_f(x)=I_f(y)\label{eq:5.33}
\end{equation}
and assume that no $U\in\cU$ contains both $x$ and $y$.

\medskip
\noindent\textbf{Step 1: Decomposition into signed-difference vectors.}

Write the three bi-infinite sequences
\[
\begin{aligned}
   z=I_\phi(x)&=(z_j)_{j\in\bbZ},
      &\text{where}\qquad z_j&=\phi(T^j x),\\
   z'=I_\phi(y)&=(z'_j)_{j\in\bbZ},
      &\text{where}\qquad z'_j&=\phi(T^j y),\\
   w=z-z'&=(w_j)_{j\in\bbZ},
      &\text{where}\qquad w_j&=z_j-z'_j,
\end{aligned}
\]
Thus $z,z'\in(\Deltaq)^{\bbZ}$ and $w\in(\bbR^q)^{\bbZ}$.
We apply Lemma~\ref{lem:two-simplex-affine} at each $j\in\bbZ$ to
decompose $w_j=\phi(T^j x)-\phi(T^j y)$ into signed differences.
By Lemma~\ref{lem:coding-package},
\begin{equation}
   \Lambda(x)\cap\Lambda(y)=\varnothing.\label{eq:5.36}
\end{equation}

Define
\begin{equation}
   D=\{j\in\bbZ:\Lambda(T^j x)\cap\Lambda(T^j y)=\varnothing\}.\label{eq:5.37}
\end{equation}
By \eqref{eq:5.36}, $0\in D$, so $D$ is nonempty.

The sets $\Lambda(T^j x)$ and $\Lambda(T^j y)$ are nonempty, and
Lemma~\ref{lem:coding-package} gives
$\{i:z_{j,i}>0\}\subseteq\Lambda(T^j x)$ and
$\{i:z'_{j,i}>0\}\subseteq\Lambda(T^j y)$.  Hence
Lemma~\ref{lem:two-simplex-affine} applies at every time $j$.  We treat $j\notin D$
and $j\in D$ separately.

\textbf{Case 1: $j\notin D$.}
Set
\[
   S_j=\Lambda(T^j x)\cup\Lambda(T^j y),
   \qquad
   c_j=\min S_j.
\]
Applying Lemma~\ref{lem:two-simplex-affine} with
$p=z_j$, $p'=z'_j$, $A=\Lambda(T^j x)$, and
$B=\Lambda(T^j y)$ gives
\begin{equation}
   w_j=\sum_{i\in S_j\setminus\{c_j\}}
       w_{j,i}(e_i-e_{c_j}).\label{eq:5.38}
\end{equation}
Define
\[
   \mathcal T_j
   =\left\{
      \operatorname{sgn}(w_{j,i})\delta(j;c_j,i):
      i\in S_j\setminus\{c_j\},\ w_{j,i}\ne0
     \right\}.
\]
Then
\begin{equation}
   |\mathcal T_j|\leq |S_j|-1
   \leq(|\Lambda(T^j x)|-1)+(|\Lambda(T^j y)|-1).\label{eq:5.39}
\end{equation}
The last inequality uses
$|\Lambda(T^j x)\cap\Lambda(T^j y)|\geq1$.
For the vector
$\theta=\operatorname{sgn}(w_{j,i})\delta(j;c_j,i)$ indexed by $i$,
set $\beta_\theta=|w_{j,i}|$.

\textbf{Case 2: $j\in D$.}
Let
\[
   a_j=\min\Lambda(T^j x),
   \qquad
   b_j=\min\Lambda(T^j y).
\]
The sets $\Lambda(T^j x)$ and $\Lambda(T^j y)$ are disjoint, so $a_j\ne b_j$.  Define
\[
   v_j=
   \begin{cases}
      -\delta(j;a_j,b_j), & a_j<b_j,\\
      \phantom{-}\delta(j;b_j,a_j), & b_j<a_j.
   \end{cases}
\]
Its value at time $j$ is $e_{a_j}-e_{b_j}$.  Applying
Lemma~\ref{lem:two-simplex-affine} with $p=z_j$, $p'=z'_j$,
$A=\Lambda(T^j x)$, and $B=\Lambda(T^j y)$ gives
\begin{equation}
 w_j
 = (e_{a_j}-e_{b_j})
   +\sum_{i\in\Lambda(T^j x)\setminus\{a_j\}} z_{j,i}(e_i-e_{a_j})
   -\sum_{i\in\Lambda(T^j y)\setminus\{b_j\}} z'_{j,i}(e_i-e_{b_j}).\label{eq:5.40}
\end{equation}
Define
\[
\begin{aligned}
   \mathcal T_j
   ={}&\left\{
      \delta(j;a_j,i):
      i\in\Lambda(T^j x)\setminus\{a_j\},\ z_{j,i}>0
      \right\}\\
   &{}\cup\left\{
      -\delta(j;b_j,i):
      i\in\Lambda(T^j y)\setminus\{b_j\},\ z'_{j,i}>0
      \right\}.
\end{aligned}
\]
Then
\begin{equation}
   |\mathcal T_j|
   \leq(|\Lambda(T^j x)|-1)+(|\Lambda(T^j y)|-1).\label{eq:5.41}
\end{equation}
For the vector $\theta=\delta(j;a_j,i)$ indexed by $i$, set
$\beta_\theta=z_{j,i}$; for the vector
$\theta=-\delta(j;b_j,i)$ indexed by $i$, set
$\beta_\theta=z'_{j,i}$.
In both cases every $\beta_\theta$ is positive.

In Case~1,
$c_j<i$ for every $i\in S_j\setminus\{c_j\}$; in Case~2,
$a_j<i$ for every $i\in\Lambda(T^j x)\setminus\{a_j\}$ and
$b_j<i$ for every $i\in\Lambda(T^j y)\setminus\{b_j\}$.  Hence every
$\theta\in\mathcal T_j$ has the form
\[
   \theta=\varepsilon\delta(j;a,b),
   \qquad 1\leq a<b\leq q,
   \quad \varepsilon\in\{-1,1\},
\]
and is therefore a signed difference.  Set $k_j=|\mathcal T_j|$.  Every
member of $\mathcal T_j$ is supported at time $j$.  Let $\widehat w^{(j)}$ be the
sequence supported at time $j$ with value $w_j$
there.  The decompositions \eqref{eq:5.38} and \eqref{eq:5.40} then give
\[
   \widehat w^{(j)}
   =
   \begin{cases}
      \displaystyle\sum_{\theta\in\mathcal T_j}\beta_\theta\theta,
         & j\notin D,\\[1.2ex]
      \displaystyle\sum_{\theta\in\mathcal T_j}\beta_\theta\theta+v_j,
         & j\in D.
   \end{cases}
\]
For every $j\in\bbZ$, the upper indices of the members of $\mathcal T_j$
are pairwise distinct: they lie in $S_j\setminus\{c_j\}$ when $j\notin D$, and in
the disjoint union
\[
   \bigl(\Lambda(T^j x)\setminus\{a_j\}\bigr)
   \mathbin{\sqcup}
   \bigl(\Lambda(T^j y)\setminus\{b_j\}\bigr)
\]
when $j\in D$.  In the latter case, the upper index of $v_j$ is
$\max\{a_j,b_j\}$ and does not belong to this union.  Consequently,
every finite subset of $\bigcup_{j\in\bbZ}\mathcal T_j$ is
upper-compatible.
Moreover, at every $j\in D$, the upper index of $v_j$ differs from all
upper indices of members of $\mathcal T_j$.

Finally, \eqref{eq:5.39} and \eqref{eq:5.41}, together with
$\kappa(T^j x)=|\Lambda(T^j x)|-1$ and
$\kappa(T^j y)=|\Lambda(T^j y)|-1$, give
\begin{equation}
   k_j\leq\kappa(T^j x)+\kappa(T^j y).\label{eq:5.42}
\end{equation}
For every finite integer interval $I$, summing \eqref{eq:5.42} and
applying \eqref{eq:coding-orbit-bound} separately to $x$ and $y$ gives
\begin{equation}
\begin{aligned}
   \sum_{j\in I}k_j
   &\leq\sum_{j\in I}\kappa(T^j x)
       +\sum_{j\in I}\kappa(T^j y)\\
   &\leq2\rho|I|+2C\\
   &=\alpha|I|+C_0.
\end{aligned}\label{eq:5.43}
\end{equation}

\medskip
\noindent\textbf{Step 2: Construction of an admissible affine
configuration.}

For every $j\in\bbZ$, choose a labeling
\[
   \mathcal T_j=\{\theta_{j,r}:1\leq r\leq k_j\}.
\]
Applying Lemma~\ref{lem:finite-window-matching} to $(k_j)$, $D$,
$\iota=1$, $\alpha$, $C_0$, and the bandwidth $R$, with its hypotheses
verified by \eqref{eq:5.43}, \eqref{eq:5.37}, and \eqref{eq:5.24}, we obtain
\[
   J=[A,B],
   \qquad
   d_*=\min(D\cap J),
   \qquad
   K=[A+R,B-R],
\]
and an $R$-injective map
\[
   \pi\colon\mathcal L_J\longrightarrow
      K\times\{1,\ldots,n\}
\]
where
\[
   \mathcal L_J
   :=\{(j,r):j\in J,\ 1\leq r\leq k_j\}
      \mathbin{\sqcup}\{(d_*,\star)\}.
\]

Let
\[
   \cE_J=\bigcup_{j\in J}\mathcal T_j
\]
be the finite set of signed differences selected over $J$.  Vectors belonging to
different $\mathcal T_j$ have different support times, so this union is
disjoint and $|\cE_J|=\sum_{j\in J}k_j$.  Set
\[
   V_J=\sum_{j\in D\cap J}v_j.
\]
The upper-index observation above implies that no vector of
$\mathcal T_{d_*}$ equals $v_{d_*}$, and vectors of $\mathcal T_j$ with
$j\ne d_*$ have a different support time; hence
$v_{d_*}\notin\cE_J$.  Hence the map
\[
   \mathcal L_J\longrightarrow\cE_J\cup\{v_{d_*}\},
   \qquad
   (j,r)\longmapsto\theta_{j,r},
   \quad
   (d_*,\star)\longmapsto v_{d_*},
\]
is bijective.  Each image has the same support time as its preimage, so
$\pi$ induces an $R$-injective map
\[
   \widetilde\pi\colon\cE_J\cup\{v_{d_*}\}
      \longrightarrow K\times\{1,\ldots,n\},
   \qquad
   \widetilde\pi(\theta_{j,r})=\pi(j,r),
   \quad
   \widetilde\pi(v_{d_*})=\pi(d_*,\star).
\]
Let $\cR$ be the image of $\widetilde\pi$.  Then
$|\cR|=|\cE_J|+1$, and $\widetilde\pi$ is $R$-bijective from
$\cE_J\cup\{v_{d_*}\}$ to $\cR$.

We verify the three conditions of
Definition~\ref{def:admissible-affine-configuration} for
\[
   \mathfrak a_J
   :=\bigl(\cE_J,D\cap J,(v_j)_{j\in D\cap J},\cR\bigr).
\]
Condition \textup{(i)} holds because $\cE_J$ is finite and
upper-compatible,
as shown above.  For condition \textup{(ii)}, $D\cap J$ is finite
and nonempty with minimum $d_*$.  The preceding decomposition also
shows that, at every $j\in D\cap J$, the upper index of $v_j$ differs from
all upper indices of members of $\mathcal T_j$.  For condition \textup{(iii)}, one has
$|\cR|=|\cE_J|+1$, and, since $j_*:=\min(D\cap J)=d_*$, the map
$\widetilde\pi$ constructed above is $R$-bijective from
$\cE_J\cup\{v_{d_*}\}$ to $\cR$.  Thus $\mathfrak a_J$ is an admissible
affine configuration.
Superregularity of $M$ implies that the following square matrix
is invertible:
\[
   A_J
   :=\left[(\cM_M\theta)_{\theta\in\cE_J},\ \cM_M V_J\right]_{\cR}
\]

\medskip
\noindent\textbf{Step 3: Contradiction with superregularity.}

On the other hand, \eqref{eq:5.33} and \eqref{eq:orbit-convolution} give,
for every $k\in\bbZ$,
\begin{equation}
\begin{aligned}
   0
   &=f(T^k x)-f(T^k y)\\
   &=(\cM_Mz)_k-(\cM_Mz')_k
    =(\cM_M w)_k.
\end{aligned}\label{eq:5.47}
\end{equation}
Define the sequence $w^{(J)}\in(\bbR^q)^{\bbZ}$ by
\[
   w^{(J)}_j=
   \begin{cases}
      w_j, & j\in J,\\
      0,   & j\notin J.
   \end{cases}
\]
It follows from Step~1 that
\[
   w^{(J)}
   =\sum_{j\in J}\widehat w^{(j)}
   =\sum_{\theta\in\cE_J}\beta_\theta\theta+V_J.
\]
Fix $k\in K$.  Since $K=[A+R,B-R]$ and $J=[A,B]$, one has
$k+r\in J$ for every $-R\leq r\leq R$.  Therefore
$w^{(J)}_{k+r}=w_{k+r}$ for every $-R\leq r\leq R$, and
\[
\begin{aligned}
   (\cM_Mw^{(J)})_k
   &=\sum_{r=-R}^{R}M_r w^{(J)}_{k+r}\\
   &=\sum_{r=-R}^{R}M_r w_{k+r}\\
   &=(\cM_Mw)_k=0,
\end{aligned}
\]
where the last equality is \eqref{eq:5.47}.  Thus
$\cM_Mw^{(J)}$ vanishes at every output time in $K$.

Apply $\cM_M$ to the decomposition of $w^{(J)}$.  Since
$\cR\subset K\times\{1,\ldots,n\}$ and $\cM_Mw^{(J)}$ vanishes on $K$,
restriction to the coordinates in $\cR$ gives
\[
   A_J\bigl((\beta_\theta)_{\theta\in\cE_J},1\bigr)^\mathsf T=0.
\]
The coefficient vector
$\bigl((\beta_\theta)_{\theta\in\cE_J},1\bigr)^\mathsf T$ is nonzero because
its last coordinate is $1$, contradicting the invertibility of $A_J$.
Thus no pair outside a common member of $\cU$ can have the same image
under $I_f$.  Therefore $f\in\mathcal G(\cU)$.

Since $f_0$ and $\varepsilon$ were arbitrary, $\mathcal G(\cU)$ is dense.

\end{proof}

\begin{proof}[Proof of Theorem~\ref{thm:main}]
Proposition~\ref{prop:sharp-density} verifies the density hypothesis in
Proposition~\ref{prop:baire-reduction}; the latter gives the dense
$G_\delta$ conclusion and the equivariant topological embedding.
\end{proof}

\section{Applications}
\label{sec:applications}

We first specialize Theorem~\ref{thm:main} to the small boundary and
marker settings.  We then compare it with finite-dimensional embedding
theorems and isolate the role of periodic points.

\subsection{The small boundary property}

Recall that $(X,T)$ has the \emph{small boundary property} if it has a
basis consisting of open sets $U$ such that
\[
   \mu(\partial U)=0
   \qquad\text{for every }\mu\in\Prob(X,T).
\]
Meyerovitch proved that this property is exactly the zero-dimensional
case of dynamical dimension
\cite[Theorem~5.1]{Meyerovitch2026}.

\begin{corollary}\label{cor:sbp-embedding}
If $(X,T)$ has the small boundary property, then
\[
   \bigl\{f\in C(X,[0,1]): I_f\text{ is a topological embedding}\bigr\}
\]
is a dense $G_\delta$ subset of $C(X,[0,1])$.  In particular,
$(X,T)$ embeds equivariantly into the one-dimensional cubical shift.
\end{corollary}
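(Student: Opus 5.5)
This follows directly from Theorem~\ref{thm:main} with $n=1$, once the small boundary property is translated into a bound on dynamical dimension. The condition Theorem~\ref{thm:main} needs for $n=1$ is $\dim(X,T)<1/2$.

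\emph{Step 1.} Meyerovitch's characterization \cite[Theorem~5.1]{Meyerovitch2026} states that the small boundary property is exactly the zero-dimensional case of dynamical dimension. So $(X,T)$ having the small boundary property gives $\dim(X,T)=0$.

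\emph{Step 2.} Since $0<1/2$, Theorem~\ref{thm:main} applies with $n=1$. It shows that the set of $f\in C(X,[0,1])$ for which $I_f$ is a topological embedding is a dense $G_\delta$ subset of $C(X,[0,1])$.

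\emph{Step 3.} Because $C(X,[0,1])$ is a complete metric space, a dense $G_\delta$ subset of it is nonempty. Any $f$ in this set gives an equivariant embedding $I_f\colon(X,T)\hookrightarrow([0,1]^{\bbZ},\sigma)$, which is the one-dimensional cubical shift.

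The only point needing care is the direction of Meyerovitch's equivalence. We need the implication from the small boundary property to $\dim(X,T)=0$, and an upper bound suffices: any value strictly below $1/2$ would do.

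If one preferred not to rely on that citation in full, I would verify this implication directly. Given $\cU$ and $\eta>0$, use the small-boundary basis to choose a refinement $\cV$ by open sets whose boundaries are null for every invariant measure. Then shrink slightly, so that the overlaps of the members of $\cV$ lie in a small neighbourhood of the union of these boundaries.

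The goal is $\sup_{\mu}\int\ord(\cV,\cdot)\,d\mu<\eta$. Uniformity over $\Prob(X,T)$ would come from compactness of $\Prob(X,T)$ together with upper semicontinuity of $\mu\mapsto\mu(F)$ for closed $F$. This verification is the main obstacle, but the citation already covers it, so the corollary is immediate.
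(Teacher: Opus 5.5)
Your proposal is correct and follows the paper's own proof: cite \cite[Theorem~5.1]{Meyerovitch2026} to get $\dim(X,T)=0<1/2$, then apply Theorem~\ref{thm:main} with $n=1$. Your Step~3 (a dense $G_\delta$ in the complete space $C(X,[0,1])$ is nonempty) only spells out what the paper leaves implicit, and your optional direct check that the small boundary property gives $\dim(X,T)=0$ is sound but not needed.
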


\begin{proof}
The small boundary property gives $\dim(X,T)=0$ by
\cite[Theorem~5.1]{Meyerovitch2026}.  Since $0<1/2$, the assertion is
Theorem~\ref{thm:main} with $n=1$.
\end{proof}

\subsection{The marker property}

Recall that $(X,T)$ has the \emph{marker property} if, for every positive
integer $N$, there is an open set $U\subset X$ such that
\[
   U\cap T^kU=\varnothing\quad(1\leq k\leq N),
   \qquad
   X=\bigcup_{k\in\bbZ}T^kU.
\]
\begin{corollary}\label{cor:marker-embedding}
Let $(X,T)$ have the marker property, and let $n$ be a positive integer.  If
\[
   \operatorname{mdim}(X,T)<\frac n2,
\]
then
\[
   \bigl\{f\in C(X,[0,1]^n): I_f\text{ is a topological embedding}\bigr\}
\]
is a dense $G_\delta$ subset of $C(X,[0,1]^n)$.
\end{corollary}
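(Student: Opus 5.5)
The plan is to reduce Corollary~\ref{cor:marker-embedding} directly to Theorem~\ref{thm:main}. The only input needed is the identity between dynamical dimension and mean dimension for systems with the marker property. Meyerovitch proved that such systems satisfy
\[
   \dim(X,T)=\operatorname{mdim}(X,T)
\]
\cite[discussion preceding Lemma~9.1 and Theorem~9.2]{Meyerovitch2026}. This was already recalled in Section~\ref{sec:introduction}.

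Granting this, the argument has two steps. First, the hypothesis $\operatorname{mdim}(X,T)<n/2$ together with the marker property gives $\dim(X,T)<n/2$. Second, Theorem~\ref{thm:main}, applied with the same integer $n$, yields that the set of $f\in C(X,[0,1]^n)$ for which $I_f$ is a topological embedding is a dense $G_\delta$ subset. No further density or Baire argument is required, since Theorem~\ref{thm:main} already packages Proposition~\ref{prop:baire-reduction} and Proposition~\ref{prop:sharp-density}.

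In fact only one inequality of the identity is used, namely $\dim(X,T)\leq\operatorname{mdim}(X,T)$ for marker systems. The reverse inequality $\operatorname{mdim}\leq\dim$ holds in general \cite[Theorem~9.1]{Meyerovitch2026} and plays no role here.

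The only real obstacle is confirming that Meyerovitch's marker hypothesis agrees with the definition recalled just before the corollary: for every $N$ there is an open $U$ that is disjoint from $T^kU$ for $1\leq k\leq N$ and whose translates cover $X$. If his formulation differs, for instance by using closed markers or an $N$-dependent variant, I would reconcile the two first. An open marker can be shrunk to a closed one with the same covering property by compactness, and conversely a closed marker can be slightly enlarged to an open one. I expect this to be routine, so the corollary is essentially immediate.
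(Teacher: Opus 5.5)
Your proposal is correct and is exactly the paper's argument: use Meyerovitch's identity $\dim(X,T)=\operatorname{mdim}(X,T)$ for systems with the marker property to turn the hypothesis into $\dim(X,T)<n/2$, then apply Theorem~\ref{thm:main}. You are right that only the inequality $\dim(X,T)\leq\operatorname{mdim}(X,T)$ is used, and your remark on reconciling marker conventions is sound but not needed here.
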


\begin{proof}
Under the marker property,
\[
   \dim(X,T)=\operatorname{mdim}(X,T)
\]
by \cite[discussion preceding Lemma~9.1 and Theorem~9.2]{Meyerovitch2026}.
The assumed inequality therefore becomes
$\dim(X,T)<n/2$, and Theorem~\ref{thm:main} applies.
\end{proof}

Corollary~\ref{cor:marker-embedding} recovers the embedding theorem of
Gutman and Tsukamoto \cite[Theorem~1.4]{GutmanTsukamoto2020}.

\subsection{Finite-dimensional systems}

Dynamical dimension never exceeds the ordinary covering dimension of
the phase space.  Combining the following comparison with
Theorem~\ref{thm:main} yields a dynamical analogue of the
Menger--N\"obeling theorem.

\begin{proposition}\label{prop:dydim-coverdim}
For every topological dynamical system $(X,T)$,
\[
   \dim(X,T)\leq\dim X.
\]
\end{proposition}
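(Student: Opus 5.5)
We argue directly from the definition of $\dim(X,T)$ recalled in Section~\ref{sec:dynamical-dimension}. If $\dim X=\infty$ there is nothing to prove, so assume $\dim X=m<\infty$. It suffices to show that $\dim(\cU,T)\leq m$ for every $\cU\in\Cov(X)$, since $\dim(X,T)$ is the supremum of these quantities.

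Fix $\cU\in\Cov(X)$. By the definition of Lebesgue covering dimension, $\cU$ has a finite open refinement $\cV\succeq\cU$ of order at most $m$. Here order means that every point lies in at most $m+1$ members of $\cV$. We may discard empty members of $\cV$. Then
\[
   \ord(\cV,x)=\#\{V\in\cV:x\in V\}-1\leq m
   \qquad(x\in X).
\]
Integrating against any $\mu\in\Prob(X,T)$, which is a probability measure, gives
\[
   \int_X\ord(\cV,x)\,d\mu(x)\leq m .
\]
Taking the supremum over $\mu$ gives $\ord(\cV,T)\leq m$, and hence
\[
   \dim(\cU,T)\leq\ord(\cV,T)\leq m .
\]
Taking the supremum over $\cU$ then yields $\dim(X,T)\leq m=\dim X$.

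The only point needing care is the convention for covering dimension. We use the standard fact that, for a compact metrizable space with $\dim X\leq m$, every finite open cover admits a finite open refinement of order at most $m$ (in the sense $\ord\leq m$, i.e.\ multiplicity at most $m+1$). This is exactly the definition of covering dimension, and finiteness of the refinement can be arranged by compactness. The measure-theoretic part causes no difficulty, because the bound on $\ord(\cV,\cdot)$ is pointwise. The case $X=\varnothing$ is excluded by the standing assumption that $X$ is nonempty.
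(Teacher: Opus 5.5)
Your proof is correct and follows the paper's argument essentially line for line. Both refine $\cU$ to a finite open cover $\cV$ with $\ord(\cV,x)\leq\dim X$ pointwise, integrate this bound against every invariant measure to get $\ord(\cV,T)\leq\dim X$, and then take suprema.
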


\begin{proof}
There is nothing to prove if $\dim X=\infty$.  Suppose that
$\dim X=d<\infty$, and let $\cU$ be a finite open cover of $X$.
By the definition of covering dimension, $\cU$ has a finite open
refinement $\cV$ satisfying
\[
   \sup_{x\in X}\ord(\cV,x)\leq d.
\]
For every $T$-invariant Borel probability measure $\mu$,
\[
   \int_X\ord(\cV,x)\,d\mu(x)\leq d.
\]
Hence $\ord(\cV,T)\leq d$, so $\dim(\cU,T)\leq d$.  Taking the
supremum over $\cU$ gives $\dim(X,T)\leq d$.
\end{proof}

\begin{corollary}\label{cor:finite-dimensional-embedding}
Let $(X,T)$ be a topological dynamical system with $\dim X=d<\infty$.
For every integer $n\geq2d+1$, the set
\[
   \bigl\{f\in C(X,[0,1]^n): I_f\text{ is a topological embedding}\bigr\}
\]
is a dense $G_\delta$ subset of $C(X,[0,1]^n)$.
\end{corollary}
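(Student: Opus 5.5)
This corollary follows directly from Theorem~\ref{thm:main} once the dimension hypothesis is checked, and that check is supplied by Proposition~\ref{prop:dydim-coverdim}. The plan is to combine the two.

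Fix an integer $n\geq 2d+1$. Proposition~\ref{prop:dydim-coverdim} gives
\[
   \dim(X,T)\leq\dim X=d.
\]
Since $n\geq 2d+1$, we have $n/2\geq d+\tfrac12>d$. Hence
\[
   \dim(X,T)\leq d<\frac n2,
\]
which is exactly the hypothesis of Theorem~\ref{thm:main}. That theorem then says the set of $f\in C(X,[0,1]^n)$ for which $I_f$ is a topological embedding is a dense $G_\delta$ subset of $C(X,[0,1]^n)$, as claimed.

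The only points to check are degenerate ones. The case $d=-1$ cannot occur, because $X$ is nonempty. For $d=0$ the condition $n\geq1$ is automatically satisfied, so the range of $n$ is still covered. The strictness of $d<n/2$ is what Theorem~\ref{thm:main} requires, and it holds because $2d<2d+1\leq n$. There is no real obstacle; all the work is contained in Theorem~\ref{thm:main} and in the covering-dimension comparison of Proposition~\ref{prop:dydim-coverdim}.
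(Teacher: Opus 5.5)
Your proof is correct and is the same as the paper's: Proposition~\ref{prop:dydim-coverdim} gives $\dim(X,T)\leq d<n/2$, and then Theorem~\ref{thm:main} applies directly. Your remarks on the degenerate cases are accurate, but the paper does not need them.
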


\begin{proof}
Proposition~\ref{prop:dydim-coverdim} gives
$\dim(X,T)\leq d<n/2$.  Apply Theorem~\ref{thm:main}.
\end{proof}

For the identity action, the existence conclusion in
Corollary~\ref{cor:finite-dimensional-embedding} reduces to the classical
$2d+1$ embedding theorem.  For a general homeomorphism it says that the
same universal bound remains valid for equivariant orbit embeddings.
Proposition~\ref{prop:sharp} below shows that the coefficient two cannot
be improved uniformly.

\section{Sharpness of the strict one-half threshold}
\label{sec:sharpness}

We now show that neither the constant $1/2$ nor the strict inequality in
Theorem~\ref{thm:main} can be improved, even for minimal systems.  We
begin with the identity action, where the obstruction reduces to
classical covering dimension.

\begin{proposition}\label{prop:sharp}
For every integer $D\geq1$, there is a topological dynamical system
$(K_D,\mathrm{id})$ with
\[
   \dim(K_D,\mathrm{id})=D
\]
that does not embed equivariantly into the $2D$-cubical shift.
\end{proposition}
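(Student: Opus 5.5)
The plan is to take $K_D$ to be a classical $D$-dimensional compactum that does not embed in $\bbR^{2D}$, for instance the $D$-skeleton of the $(2D+2)$-simplex. This is the van Kampen--Flores complex, which is $D$-dimensional and admits no embedding into $\bbR^{2D}$. For $D=1$ this is the complete graph $K_5$, i.e.\ the $1$-skeleton of the $4$-simplex, which is nonplanar. We let $T=\mathrm{id}$ on $K_D$ and verify two things: that the dynamical dimension equals $D$, and that no equivariant embedding into $(([0,1]^{2D})^{\bbZ},\sigma)$ exists.

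For the dimension, I would use the fact noted in the introduction that for the identity action $\dim(X,\mathrm{id})$ reduces to covering dimension. To make this self-contained, first note that Proposition~\ref{prop:dydim-coverdim} gives $\dim(K_D,\mathrm{id})\leq\dim K_D=D$. For the reverse inequality, every Dirac measure $\delta_x$ is invariant under the identity. Hence for any cover $\cV$ we have $\ord(\cV,\mathrm{id})\geq\sup_x\ord(\cV,x)$, so $\ord(\cV,\mathrm{id})=\sup_x\ord(\cV,x)=\ord(\cV)$. Therefore $\dim(\cU,\mathrm{id})$ is the usual infimum of orders over refinements. Taking the supremum over $\cU$ yields $\dim K_D=D$, because the $D$-skeleton of a simplex has covering dimension exactly $D$.

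For non-embeddability, suppose $\Psi\colon K_D\to([0,1]^{2D})^{\bbZ}$ is an equivariant embedding. As recalled in Section~\ref{sec:basic-notation}, $\Psi=I_f$ with $f=\Psi(\cdot)_0$. Equivariance with $T=\mathrm{id}$ gives $\sigma\Psi(x)=\Psi(x)$, so every $\Psi(x)$ is a constant sequence $(f(x))_{k\in\bbZ}$. Injectivity of $\Psi$ then forces $f$ to be injective. Since $K_D$ is compact, $f$ is a topological embedding of $K_D$ into $[0,1]^{2D}\subset\bbR^{2D}$, which contradicts the van Kampen--Flores theorem.

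The only substantive input is the classical non-embedding theorem, which we cite rather than prove, together with the identification $\dim(K,\mathrm{id})=\dim K$. Citing van Kampen--Flores is the main potential obstacle: we rely on the literature, for example Hurewicz--Wallman or Matoušek's treatment of Borsuk--Ulam methods, for the fact that the $D$-skeleton of the $(2D+2)$-simplex does not embed in $\bbR^{2D}$. Everything else is immediate from the definitions.
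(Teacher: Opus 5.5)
Your proposal is correct and follows the paper's proof essentially step for step: the same van Kampen--Flores complex, the same Dirac-measure argument showing $\ord(\cV,\mathrm{id})=\sup_x\ord(\cV,x)$ (so that $\dim(K_D,\mathrm{id})=\dim K_D$), and the same observation that an equivariant image must lie in $\operatorname{Fix}(\sigma)\cong[0,1]^{2D}$. Invoking Proposition~\ref{prop:dydim-coverdim} for the upper bound is redundant once you have that identity of orders, but it is harmless.
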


\begin{proof}
Choose a $D$-dimensional compactum $K_D$ that does not topologically embed
into $\bbR^{2D}$.  For example, the $D$-skeleton of the $(2D+2)$-simplex
is a $D$-dimensional finite compact polyhedron and, by the
van Kampen--Flores theorem, does not embed into $\bbR^{2D}$
\cite[Theorem~5.1.1; see also Section~5.6]{Matousek2003}.  Give $K_D$
the identity action.

For the identity action, every probability measure is invariant.  Thus,
for every finite open cover $\cV$,
\[
   \ord(\cV,\mathrm{id})
   =\sup_{\mu\in\Prob(K_D)}\int_{K_D}\ord(\cV,x)\,d\mu(x)
   =\sup_{x\in K_D}\ord(\cV,x),
\]
where the last equality follows by taking Dirac measures.  Substitution
in the definition of dynamical dimension shows directly that
$\dim(K_D,\mathrm{id})$ is the usual Lebesgue covering dimension of
$K_D$.  The chosen polyhedron has covering dimension $D$
\cite{HurewiczWallman}; hence
\[
   \dim(K_D,\mathrm{id})=\dim K_D=D.
\]

If $F$ were an equivariant embedding into the $2D$-cubical shift, then
$\sigma F(x)=F(\mathrm{id}(x))=F(x)$ for every $x\in K_D$.  Hence
\[
   F(K_D)\subset\operatorname{Fix}(\sigma)
   =\{(\ldots,a,a,a,\ldots):a\in[0,1]^{2D}\}
   \cong[0,1]^{2D}.
\]
Thus $F$ would give a topological embedding of $K_D$ into
$[0,1]^{2D}\subset\bbR^{2D}$, contradicting the choice of $K_D$.
\end{proof}

Taking $n=2D$ in Proposition~\ref{prop:sharp} gives
$\dim(K_D,\mathrm{id})=n/2$ but no embedding into the $n$-cubical shift,
so the strict inequality in Theorem~\ref{thm:main} cannot be replaced by
a non-strict one.

The threshold is sharp even within the class of minimal systems.

\begin{proposition}\label{prop:minimal-sharp}
For every integer $n\geq1$, there exists a minimal topological
dynamical system $(X_n,T_n)$ such that
\[
   \dim(X_n,T_n)=\frac n2,
\]
but $(X_n,T_n)$ does not embed equivariantly into
$(([0,1]^n)^{\bbZ},\sigma)$.
\end{proposition}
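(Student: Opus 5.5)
The plan is to build on the Lindenstrauss--Tsukamoto counterexample rather than construct a new system. In \cite{LindenstraussTsukamoto2014} they construct, for every integer $n\geq1$, a minimal system $(X_n,T_n)$ with
\[
   \operatorname{mdim}(X_n,T_n)=\frac n2
\]
that does not embed equivariantly into $(([0,1]^n)^{\bbZ},\sigma)$. That paper was the original motivation for the one-half threshold in their conjecture. I take this system as given. The non-embedding conclusion of the proposition is then exactly their theorem, and only the identity $\dim(X_n,T_n)=n/2$ remains to be proved.

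\textbf{Step 1: $X_n$ is infinite.} Since $\operatorname{mdim}(X_n,T_n)=n/2>0$ and finite systems have mean dimension zero, $X_n$ is infinite.

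\textbf{Step 2: $X_n$ has the marker property.} An infinite minimal system is aperiodic. Infinite minimal systems, and more generally extensions of aperiodic minimal systems, have the marker property; this is a standard fact used in \cite{GutmanTsukamoto2020, GutmanQiaoTsukamoto2019}. For a self-contained argument I would proceed as follows. Given $N$, pick a point $x$ and a small open neighbourhood $U$ of $x$ with $U\cap T^kU=\varnothing$ for $1\leq k\leq N$; this is possible because $T^kx\neq x$ for every $k\neq0$, using continuity and finitely many $k$. Minimality then gives $X=\bigcup_{k\in\bbZ}T^kU$, since every orbit is dense and hence enters $U$.

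\textbf{Step 3: conclude.} By Meyerovitch's theorem \cite[discussion preceding Lemma~9.1 and Theorem~9.2]{Meyerovitch2026}, quoted in the introduction and already used in Corollary~\ref{cor:marker-embedding}, the marker property gives
\[
   \dim(X_n,T_n)=\operatorname{mdim}(X_n,T_n)=\frac n2.
\]
Together with the non-embedding statement of Lindenstrauss--Tsukamoto, this proves the proposition.

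The main obstacle is not in this proof but in the imported example. The paper has to invoke the Lindenstrauss--Tsukamoto construction as a black box, and the only check needed is that its hypotheses match the ones used here: minimal, mean dimension exactly $n/2$, and no embedding into the $n$-cubical shift, for every $n\geq1$. If only a version for some values of $n$ were available, I would try adapting their construction, a Jaworski-type or block-product minimal subsystem of $(([0,1]^{n/2})^{\bbZ},\sigma)$ with a topological obstruction coming from van Kampen--Flores-type nonembeddability. That would be considerably harder, so I would first verify that their theorem is stated for all $n$.
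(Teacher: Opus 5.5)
Your proposal is correct and follows the paper's proof. The paper also imports the Lindenstrauss--Tsukamoto minimal example (their Theorem~1.3, which is stated for every $n\geq1$), notes that it has the marker property, and applies Meyerovitch's identity $\dim(X,T)=\operatorname{mdim}(X,T)$.

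Your Steps~1--2 add something small. Positive mean dimension forces $X_n$ to be infinite, hence aperiodic, hence marker. This spells out what the paper compresses into ``minimality implies the marker property,'' which strictly speaking needs the minimal system to be infinite.
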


\begin{proof}
By Lindenstrauss and Tsukamoto
\cite[Theorem~1.3]{LindenstraussTsukamoto2014}, there exists a minimal
topological dynamical system $(X_n,T_n)$ satisfying
\[
   \operatorname{mdim}(X_n,T_n)=\frac n2
\]
that does not embed equivariantly into $(([0,1]^n)^{\bbZ},\sigma)$.
Since the minimality implies the marker property, we have
$\dim(X_n,T_n)=\operatorname{mdim}(X_n,T_n)$
\cite[Theorem~9.2]{Meyerovitch2026}.
Therefore
\[
   \dim(X_n,T_n)=\operatorname{mdim}(X_n,T_n)=\frac n2,
\]
which proves the proposition.
\end{proof}

Thus the systems above are minimal, free, and have the marker
property; the strict one-half threshold is already optimal within this
class.

\section*{Acknowledgments}

The author is grateful to Tom Meyerovitch for many helpful discussions
and valuable comments that improved the manuscript.

\begingroup
\raggedright

\endgroup


\begin{thebibliography}{99}

\bibitem{Billingsley1999}
P.~Billingsley,
\emph{Convergence of Probability Measures}, 2nd ed.,
Wiley Series in Probability and Statistics, Wiley, New York, 1999.

\bibitem{DranishnikovLevin2025}
A.~Dranishnikov and M.~Levin,
\emph{A free $\bbZ$-action by isometries on a compact metric space which
is not embeddable into a cubical shift},
\href{https://arxiv.org/abs/2508.14628}{arXiv:2508.14628}, 2025.

\bibitem{Gromov1999}
M.~Gromov,
\emph{Topological invariants of dynamical systems and spaces of
holomorphic maps: I},
Math. Phys. Anal. Geom. \textbf{2} (1999), no.~4, 323--415,
\href{https://doi.org/10.1023/A:1009841100168}
{doi:10.1023/A:1009841100168}.

\bibitem{Gutman2015}
Y.~Gutman,
\emph{Mean dimension and Jaworski-type theorems},
Proc. London Math. Soc. (3) \textbf{111} (2015), no.~4, 831--850,
\href{https://doi.org/10.1112/plms/pdv043}
{doi:10.1112/plms/pdv043}.

\bibitem{GutmanLevinMeyerovitch2025}
Y.~Gutman, M.~Levin, and T.~Meyerovitch,
\emph{Equivariant embedding of finite-dimensional dynamical systems},
Math. Ann. \textbf{391} (2025), 915--936,
\href{https://doi.org/10.1007/s00208-024-02911-y}
{doi:10.1007/s00208-024-02911-y}.

\bibitem{GutmanQiaoTsukamoto2019}
Y.~Gutman, Y.~Qiao, and M.~Tsukamoto,
\emph{Application of signal analysis to the embedding problem of
$\bbZ^k$-actions},
Geom. Funct. Anal. \textbf{29} (2019), no.~5, 1440--1502,
\href{https://doi.org/10.1007/s00039-019-00499-z}
{doi:10.1007/s00039-019-00499-z}.

\bibitem{GutmanTsukamoto2014}
Y.~Gutman and M.~Tsukamoto,
\emph{Mean dimension and a sharp embedding theorem: extensions of
aperiodic subshifts},
Ergodic Theory Dynam. Systems \textbf{34} (2014), no.~6, 1888--1896,
\href{https://doi.org/10.1017/etds.2013.30}
{doi:10.1017/etds.2013.30}.

\bibitem{GutmanTsukamoto2020}
Y.~Gutman and M.~Tsukamoto,
\emph{Embedding minimal dynamical systems into Hilbert cubes},
Invent. Math. \textbf{221} (2020), no.~1, 113--166,
\href{https://doi.org/10.1007/s00222-019-00942-w}
{doi:10.1007/s00222-019-00942-w}.

\bibitem{Hall1935}
P.~Hall,
\emph{On representatives of subsets},
J. London Math. Soc. \textbf{10} (1935), no.~1, 26--30,
\href{https://doi.org/10.1112/jlms/s1-10.37.26}
{doi:10.1112/jlms/s1-10.37.26}.

\bibitem{HurewiczWallman}
W.~Hurewicz and H.~Wallman,
\emph{Dimension Theory},
Princeton Mathematical Series, vol.~4,
Princeton University Press, Princeton, NJ, 1941.

\bibitem{Lindenstrauss1999}
E.~Lindenstrauss,
\emph{Mean dimension, small entropy factors and an embedding theorem},
Publ. Math. Inst. Hautes \'Etudes Sci. \textbf{89} (1999), 227--262,
\href{https://doi.org/10.1007/BF02698858}
{doi:10.1007/BF02698858}.

\bibitem{LindenstraussTsukamoto2014}
E.~Lindenstrauss and M.~Tsukamoto,
\emph{Mean dimension and an embedding problem: an example},
Israel J. Math. \textbf{199} (2014), no.~2, 573--584,
\href{https://doi.org/10.1007/s11856-013-0040-9}
{doi:10.1007/s11856-013-0040-9}.

\bibitem{LindenstraussWeiss2000}
E.~Lindenstrauss and B.~Weiss,
\emph{Mean topological dimension},
Israel J. Math. \textbf{115} (2000), 1--24,
\href{https://doi.org/10.1007/BF02810577}
{doi:10.1007/BF02810577}.

\bibitem{Matousek2003}
J.~Matou\v{s}ek,
\emph{Using the Borsuk--Ulam Theorem: Lectures on Topological Methods in
Combinatorics and Geometry},
Universitext, Springer-Verlag, Berlin, 2003,
\href{https://doi.org/10.1007/978-3-540-76649-0}
{doi:10.1007/978-3-540-76649-0}.

\bibitem{Meyerovitch2026}
T.~Meyerovitch,
\emph{A new notion of dimension for dynamical systems and shift
embeddability},
Geom. Funct. Anal. \textbf{36} (2026), no.~3, 947--980,
\href{https://doi.org/10.1007/s00039-026-00742-4}
{doi:10.1007/s00039-026-00742-4}.

\bibitem{Shi2026}
R.~Shi,
\emph{Dynamical dimension and shift embeddability without the marker
property},
\href{https://arxiv.org/abs/2607.27880}{arXiv:2607.27880}, 2026.

\end{thebibliography}
\end{document}